\newtheorem{pr}{\sc Proposition}
\def\llfloor{\left\lfloor}
\def\rrfloor{\right\rfloor}
\newtheorem{theorem}{Theorem}
\theoremstyle{plain}
\newtheorem{conjecture}{Conjecture}
\newtheorem{corollary}{Corollary}
\newtheorem{lemma}{Lemma}
\newtheorem{proposition}{Proposition}
\numberwithin{equation}{section}
\def\er{\mathbb{R}}
\DeclareMathOperator{\vol}{vol}
\DeclareMathOperator{\sgn}{sgn}
\begin{document}
	
	\title[Neighborly boxes in $\er^d$]{New bounds on the maximum number of neighborly boxes in $\er^{d}$}
	
	\author[N. Alon]{Noga Alon}
	\address{Department of Mathematics, Princeton University, Princeton, NJ 08544, USA and Schools of
		Mathematics and Computer Science, Tel Aviv University, Tel Aviv 69978, Israel}
	\email{nalon@math.princeton.edu}
	\thanks{The first author was supported in part by NSF grant DMS-2154082 and BSF grant 2018267.}
	
	\author[J. Grytczuk]{Jaros\l aw Grytczuk}
	\address{Faculty of Mathematics and Information Science, Warsaw University
		of Technology, 00-662 Warsaw, Poland}
	\email{jaroslaw.grytczuk@pw.edu.pl}
	\thanks{The second author was supported in part by Narodowe Centrum Nauki, grant 2020/37/B/ST1/03298.}
	
	\author[A. P. Kisielewicz]{Andrzej P. Kisielewicz}
	\address{Wydzia{\l} Matematyki, Informatyki i Ekonometrii, Uniwersytet Zielonog\'orski, ul. Podg\'orna 50, 65-246 Zielona G\'ora, Poland}
	\email{A.Kisielewicz@wmie.uz.zgora.pl}
	
	\author[K. Przes\l awski]{Krzysztof Przes\l awski}
	\address{Wydzia{\l} Matematyki, Informatyki i Ekonometrii, Uniwersytet Zielonog\'orski, ul. Podg\'orna 50, 65-246 Zielona G\'ora, Poland}
	\email{K.Przeslawski@wmie.uz.zgora.pl}

		\begin{abstract}
			A family of axis-aligned boxes in $\er^d$ is \emph{$k$-neighborly} if the intersection of every two of them has dimension at least $d-k$ and at most $d-1$. Let $n(k,d)$ denote the maximum size of such a family. It is known that $n(k,d)$ can be equivalently defined as the maximum number of vertices in a complete graph whose edges can be covered by $d$ complete bipartite graphs, with each edge covered at most $k$ times.
			
			We derive a new upper bound on $n(k,d)$, which implies, in particular, that $n(k,d)\leqslant (2-\delta)^d$ if $k\leqslant (1-\varepsilon)d$, where $\delta>0$ depends on arbitrarily chosen $\varepsilon>0$. The proof applies a classical result of Kleitman, concerning the maximum size of sets with a given diameter in discrete hypercubes. By an explicit construction we obtain also a new lower bound for $n(k,d)$, which implies that $n(k,d)\geqslant (1-o(1))\frac{d^k}{k!}$. We also study $k$-neighborly families of boxes with additional structural properties. Families called \emph{total laminations}, that split in a tree-like fashion, turn out to be particularly useful for explicit constructions. We pose a few conjectures based on these constructions and some computational experiments.
		\end{abstract}
		
		\maketitle
		
		\section{Introduction}
		
		Let $k,d$ be two positive integers, with $k\leqslant d$. A {\it standard box} in $\er^d$ is a set of the form $K=K_1\times \cdots \times K_d$, where $K_i\subset \er$ is a closed interval, for each $i\in [d]$, where $[d]=\{1,2,\ldots, d\}$. Two standard boxes $K,L\subset \er^d$ are {\it k-neighborly} if $d-k\leqslant {\rm dim}(K\cap L)\leqslant d-1$. A family $\mathscr{F}$ of standard boxes is {\it k-neighborly} if every two boxes in $\mathscr{F}$ are $k$-neighborly. 
		
		Let $n(k,d)$ be the maximum possible cardinality of a family $\mathscr{F}$ of $k$-neighborly standard boxes in $\er^d$. In 1985 Zaks \cite{Zaks3} proved that $n(1,d)=d+1$ by relating the problem to the well-known theorem of Graham and Pollak \cite{GP} on bipartite decompositions of complete graphs. It is also not hard to demonstrate that $n(d,d)=2^d$. More generally, as explained in \cite{Al}, $n(k,d)$ is the maximum possible number of vertices in a complete graph that can be covered by $d$ complete bipartite subgraphs (\emph{bicliques}) so that every edge is covered at least once and at most $k$ times. Using this interpretation, the following general bounds were proved in \cite{Al}:
		\begin{equation}\label{Eq Alon}
			\left(\frac{d}{k}\right)^k\leqslant \prod_{i=0}^{k-1}\left(\left\lfloor \frac{d+i}{k}\right\rfloor+1\right) \leqslant n(k,d) \leqslant \sum_{i=0}^k2^i\binom{d}{i}< 2\cdot (2e)^k\cdot \left(\frac{d}{k}\right)^k.
		\end{equation}
		An improvement of the upper bound, namely, $n(k,d)\leqslant 1+ \sum_{i=1}^k2^{i-1}\binom{d}{i}$, was obtained by Huang and Sudakov in \cite{HS} (see also \cite{CT} for a generalization). These upper bounds are not very far from the lower bound for $k$ much smaller than $d$. For large $k$, however, the gap is very large. In particular, for $k >0.23 d$ and large $d$ these bounds are larger than the trivial $2^d$ upper bound which holds for every $k \leqslant d$.
		
		In the present paper we give a new upper bound on $n(k,d)$, which improves the above estimate if $k$ is close enough to $d$. It is stated in the following theorem, where an easy lower bound on $n(k,d)$ is also included, for comparison.
		
		\begin{theorem}\label{no} For every $1\leqslant k \leqslant d-1$,
			\begin{equation}\label{Main Theorem eq1}
				n(k,d) \geqslant \sum_{i=0}^{\lfloor k/2 \rfloor} {d \choose i}.
			\end{equation}
			For every $1 \leqslant k \leqslant k +2t-2 \leqslant d-1$, where $t$ is an arbitrary
			positive integer,
			\begin{equation}\label{Main Theorem eq2}
				n(k,d) \leqslant 2^{d-t}+\sum_{i=0}^{\lceil (k+2t-2)/2 \rceil}  {d \choose i}.
			\end{equation}
		\end{theorem}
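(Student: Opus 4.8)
The plan is to prove the two inequalities independently, using only the reformulation of $n(k,d)$ recalled in the introduction together with Kleitman's diameter theorem.

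For \eqref{Main Theorem eq1} I would exhibit the boxes explicitly: for $v\in\{0,1\}^d$ let $K^v$ have $\ell$-th edge $[-1,0]$ if $v_\ell=0$ and $[0,1]$ if $v_\ell=1$. Every $K^v$ contains the origin and $\dim(K^u\cap K^v)=d-d_H(u,v)$ for the Hamming distance $d_H$, so $K^u,K^v$ are $k$-neighborly exactly when $1\le d_H(u,v)\le k$. Hence the subfamily indexed by the Hamming ball of radius $\lfloor k/2\rfloor$ about $0^d$ (which makes sense since $k\le d-1$) is $k$-neighborly and has $\sum_{i=0}^{\lfloor k/2\rfloor}\binom di$ members, because any two of its codewords are at Hamming distance between $1$ and $2\lfloor k/2\rfloor\le k$.

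For \eqref{Main Theorem eq2} I would pass to the equivalent description: $n(k,d)$ is the largest $N$ admitting distinct vectors $v^1,\dots,v^N\in\{0,1,\ast\}^d$ --- record each vertex of the complete graph by the vector whose $\ell$-th entry names the part of the $\ell$-th biclique containing it, with $\ast$ meaning neither --- such that $1\le |D(v^i,v^j)|\le k$ for all $i\ne j$, where $D(u,v)=\{\ell:\{u_\ell,v_\ell\}=\{0,1\}\}$; note $|D(u,v)|\ge1$ fails precisely when $u$ and $v$ agree on every coordinate where neither is $\ast$. Fix an optimal family $\mathscr V$ and, writing $s(v)$ for the number of $\ast$'s in $v$, split $\mathscr V$ at the threshold $t$. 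The vectors with $s(v)\ge t$ have pairwise disjoint subcubes of $\{0,1\}$-completions --- disjoint because $|D|\ge1$ rules out compatibility --- each of size $\ge 2^t$, so there are at most $2^{d-t}$ of them. For the remaining vectors, with $s(v)\le t-1$, set $\bar v\in\{0,1\}^d$ to be $v$ with each $\ast$ turned into $0$; a coordinate where $\bar u,\bar v$ differ is either in $D(u,v)$ or has exactly one of $u_\ell,v_\ell$ equal to $\ast$, whence
\[
d_H(\bar u,\bar v)\ \le\ |D(u,v)|+s(u)+s(v)\ \le\ k+2t-2\ \le\ d-1 .
\]
Thus $v\mapsto\bar v$ is injective (equal images force $|D|=0$, hence $u=v$) onto a subset of $\{0,1\}^d$ of diameter at most $k+2t-2$, which Kleitman's theorem bounds by $\sum_{i=0}^{\lceil(k+2t-2)/2\rceil}\binom di$ (when the diameter $D$ is odd, this rounding absorbs the extra $\binom{d-1}{(D-1)/2}\le\binom d{(D+1)/2}$ term). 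Adding the two estimates gives the claim.

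I expect no genuinely hard step; the one idea that makes the argument work is that, to invoke Kleitman, one must convert the purely combinatorial quantity $|D(\cdot,\cdot)|$ into an honest Hamming distance in $\{0,1\}^d$, and the only loss in doing so --- replacing $\ast$'s by $0$'s --- is bounded by the number of $\ast$'s, so the cutoff $t$ is chosen exactly to balance this loss against the crude packing bound $2^{d-t}$ for subcubes of dimension $\ge t$. The remaining care is only with the degenerate parameter ranges (small $k$, or $k+2t-2$ near $d-1$) and the odd-diameter form of Kleitman's inequality, both routine.
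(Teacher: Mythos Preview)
Your proposal is correct and follows essentially the same route as the paper: the lower bound via the Hamming ball of radius $\lfloor k/2\rfloor$ in $\{0,1\}^d$, and the upper bound by splitting an optimal family in $\{0,1,\ast\}^d$ at the joker-count threshold $t$, bounding the high-joker part by the disjoint-subcube packing estimate $2^{d-t}$ and the low-joker part by replacing $\ast$'s with $0$'s and applying Kleitman's theorem to the resulting set of diameter at most $k+2t-2$. The only cosmetic differences are that the paper phrases the packing bound via $\sum_i 2^i f_i\le 2^d$ and deduces distinctness of the $\bar v$'s from disjointness of the subcubes $H(v)$ rather than from $|D|\ge 1$, and it does not spell out the odd-diameter rounding as you do.
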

		
		Two simple corollaries of the result above are the following.
		\begin{corollary}\label{Cor1}
			\label{c12}
			For any $\varepsilon >0$ there is
			$\delta>0$ so that for any $k \leqslant (1-\varepsilon)d$,
			\begin{equation}
				n(k,d) \leqslant (2-\delta)^d.
			\end{equation}
			
		\end{corollary}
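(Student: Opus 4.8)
The plan is to derive the corollary directly from the upper bound \eqref{Main Theorem eq2} of Theorem \ref{no}, choosing the free parameter $t$ proportional to $d$; the hypothesis $k\le(1-\varepsilon)d$ is precisely what creates the room needed to do this. So, given $\varepsilon\in(0,1)$ and a pair $(k,d)$ with $1\le k\le(1-\varepsilon)d$, I would set $t=\lceil\varepsilon d/3\rceil$. Since $t\le\varepsilon d/3+1$, one gets $k+2t-2\le(1-\varepsilon)d+2\varepsilon d/3=(1-\varepsilon/3)d$, which is $\le d-1$ as soon as $d\ge 3/\varepsilon$; together with $k\ge 1$ and $t\ge 1$ this makes \eqref{Main Theorem eq2} applicable, giving
\[
n(k,d)\le 2^{\,d-t}+\sum_{i=0}^{\lceil (k+2t-2)/2\rceil}\binom{d}{i}.
\]

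Next I would bound the two summands separately and show each is exponentially below $2^d$. For the first, $t\ge\varepsilon d/3$ gives $2^{\,d-t}\le 2^{(1-\varepsilon/3)d}$. For the second, $\lceil(k+2t-2)/2\rceil\le\tfrac12(1-\varepsilon/3)d+1$, which for $d\ge 12/\varepsilon$ is at most $m:=(\tfrac12-\varepsilon/12)d$; since $m\le d/2$, the standard estimate for the lower tail of the binomial coefficients, $\sum_{i=0}^{m}\binom{d}{i}\le 2^{d\,H(m/d)}$ with $H(x)=-x\log_2 x-(1-x)\log_2(1-x)$, yields $\sum_{i=0}^{\lceil(k+2t-2)/2\rceil}\binom{d}{i}\le 2^{H(1/2-\varepsilon/12)\,d}$. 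Both bases are strictly less than $2$: trivially $2^{1-\varepsilon/3}<2$, and $2^{H(1/2-\varepsilon/12)}<2$ because $H(x)<1$ for every $x\ne\tfrac12$. Writing $\lambda:=\max\{2^{1-\varepsilon/3},\,2^{H(1/2-\varepsilon/12)}\}<2$, we obtain $n(k,d)\le 2\lambda^{d}$ for all $d$ beyond an explicit threshold $d_0=d_0(\varepsilon)$ (one may take $d_0=\lceil 12/\varepsilon\rceil$).

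Finally I would assemble the constant. Pick $\delta_0:=(2-\lambda)/2>0$, so that $\lambda<2-\delta_0<2$; then $2\lambda^{d}\le(2-\delta_0)^{d}$ once $d$ exceeds a further threshold $d_1$ depending only on $\varepsilon$ (to absorb the factor $2$). For the finitely many remaining pairs $(k,d)$, namely those with $d<\max\{d_0,d_1\}$, one uses that $k\le(1-\varepsilon)d<d$ forces $n(k,d)<2^{d}$, so for each such pair there is a positive $\delta$ with $(2-\delta)^{d}\ge n(k,d)$; taking $\delta$ to be the minimum of $\delta_0$ and these finitely many values completes the proof. The only real point of the argument is the choice of $t$ in the first paragraph: it must be a constant multiple of $d$ so that $2^{d-t}$ is exponentially smaller than $2^{d}$, yet small enough that the truncation level $\lceil(k+2t-2)/2\rceil$ stays a constant factor below $d/2$ (so that the binomial sum is exponentially below $2^{d}$ as well), and it is exactly the gap $d-k\ge\varepsilon d$ that allows both requirements to be met at once.
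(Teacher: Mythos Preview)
Your proof is correct and follows essentially the same approach as the paper: plug a value of $t$ proportional to $d$ (the paper uses $t=\varepsilon d/4$, you use $t=\lceil\varepsilon d/3\rceil$) into inequality \eqref{Main Theorem eq2}, then bound the two summands separately using standard tail estimates for binomial sums. Your write-up is more explicit---you spell out the entropy bound and handle the small-$d$ cases carefully---whereas the paper simply cites ``standard estimates for binomial distributions'' and notes $\delta=\Omega(\varepsilon^2)$; but the underlying argument is the same.
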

		\begin{corollary}\label{Cor2}
			\label{c13}
			If $d,k$ tend to infinity
			so that
			$d-k \mapsto \infty$ and $d-k=o(d^{2/3})$, then
			\begin{equation}
				n(k,d) =(1+o(1)) \sum_{i=0}^{\lfloor k/2 \rfloor} {d \choose i}.
			\end{equation}
		\end{corollary}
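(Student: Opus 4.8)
The lower bound in Corollary~\ref{Cor2} is nothing but \eqref{Main Theorem eq1}, so the entire task is to establish the matching upper bound $n(k,d)\le (1+o(1))\,S$, where I abbreviate $S:=\sum_{i=0}^{\lfloor k/2\rfloor}\binom di$ and $\omega:=d-k$ (thus $\omega\to\infty$ and $\omega=o(d^{2/3})$). The plan is to apply \eqref{Main Theorem eq2} with a parameter $t=t(d,k)$ still to be chosen. Since $\lceil (k+2t-2)/2\rceil\le \lfloor k/2\rfloor+t$, the right-hand side of \eqref{Main Theorem eq2} exceeds $S$ by at most the additive term $2^{d-t}$ together with the at most $t$ extra binomial coefficients $\binom{d}{\lfloor k/2\rfloor+1},\dots,\binom{d}{\lfloor k/2\rfloor+t}$. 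Hence it is enough to exhibit a positive integer $t\le (d-k+1)/2$ --- the range in which \eqref{Main Theorem eq2} is valid --- such that
\[
2^{d-t}=o(S)\qquad\text{and}\qquad \sum_{i=\lfloor k/2\rfloor+1}^{\lfloor k/2\rfloor+t}\binom di=o(S).
\]

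Both goals reduce to elementary control of binomial coefficients near the centre. Write $m:=\lfloor k/2\rfloor=\tfrac d2-\tfrac\omega2+O(1)$. The consecutive ratios $\binom{d}{i}/\binom{d}{i-1}=(d-i+1)/i$ stay within a factor $1+O(\omega/d)$ of $1$ throughout a window of length $O(\omega+\sqrt d)$ about $m$. Reading this from above gives $\binom{d}{m+t}\le e^{O(t\omega/d)}\binom dm$, hence $\sum_{i=m+1}^{m+t}\binom di\le t\,e^{O(t\omega/d)}\binom dm$. Reading it from below, summing a geometric tail yields $S\ge c\,\dfrac{d}{\omega+\sqrt d}\,\binom dm$, while telescoping the ratios from $\lfloor d/2\rfloor$ down to $m$ yields $\binom dm\ge c\,2^{d}d^{-1/2}e^{-O(\omega^{2}/d)}$. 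Combining the last two,
\[
S\ \ge\ c'\,2^{d}\,\min\!\Big(1,\tfrac{\sqrt d}{\omega}\Big)\,e^{-O(\omega^{2}/d)} .
\]

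Inserting these estimates, $\dfrac1S\sum_{i=m+1}^{m+t}\binom di\le \dfrac{t\,e^{O(t\omega/d)}\,(\omega+\sqrt d)}{c\,d}$, which tends to $0$ whenever $t(\omega+\sqrt d)=o(d)$; and $2^{d-t}/S\le \dfrac{\max(1,\omega/\sqrt d)\,e^{O(\omega^{2}/d)}}{c'}\,2^{-t}$, which tends to $0$ whenever $t/\max(1,\omega^{2}/d)\to\infty$. The set of integers $t$ satisfying these two conditions and $1\le t\le (d-k+1)/2$ is non-empty precisely when $\max(1,\omega^{2}/d)\ll\min\!\big(\omega,\sqrt d,\,d/\omega\big)$, whose binding instance $\omega^{2}/d\ll d/\omega$ is exactly $\omega=o(d^{2/3})$. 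A concrete admissible choice is $t:=\big\lceil\sqrt{d-k}\,\big\rceil$: it is a positive integer, it satisfies $t\le (d-k+1)/2$ once $d-k$ is large, and the two displayed conditions $t(\omega+\sqrt d)=o(d)$ and $t/\max(1,\omega^{2}/d)\to\infty$ follow from $\omega\to\infty$ and $\omega=o(d^{2/3})$ by a short computation, split into the regimes $\omega\le\sqrt d$ and $\omega>\sqrt d$.

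The substance of the proof is the pair of binomial estimates of the second paragraph, above all identifying the exponential factor $e^{-\Theta(\omega^{2}/d)}$ by which $\binom dm$, and hence $S$, sits below $2^{d}$. With that in hand, the role of the hypothesis $d-k=o(d^{2/3})$ becomes transparent: it reconciles the two opposing demands on $t$, namely that $t$ be large enough to beat $\omega^{2}/d$ (so that $2^{d-t}=o(S)$) yet small enough to stay below $d/\omega$ (so that the $t$ extra binomial coefficients remain $o(S)$), and such a $t$ exists exactly when $\omega^{2}/d\ll d/\omega$. The remaining steps are routine bookkeeping.
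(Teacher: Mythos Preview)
Your proof is correct and follows essentially the same route as the paper: apply \eqref{Main Theorem eq2} with a well-chosen $t$ and then control both the $2^{d-t}$ term and the $t$ extra binomial coefficients via standard estimates for binomials near the centre. The only real difference is the choice of $t$: the paper splits into two regimes (taking $t=2\log s$ when $s\le 10\sqrt d$ and $t=10s^2/d$ when $s\ge 10\sqrt d$, where $s=d-k$), whereas you use the single formula $t=\lceil\sqrt{d-k}\,\rceil$ and verify it works in both regimes. Your write-up also makes the provenance of the threshold $d-k=o(d^{2/3})$ more explicit, as the crossing point of the two competing constraints on $t$. One tiny imprecision: in a window of length $O(\omega+\sqrt d)$ about $m$ the ratio deviation is $1+O((\omega+\sqrt d)/d)$ rather than $1+O(\omega/d)$, but since you later invoke $t(\omega+\sqrt d)=o(d)$ anyway this does not affect the argument.
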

		Note that a special case of Corollary \ref{c13} gives that
		for any $k=d-s$ with $s$ tending to infinity and satisfying 
		$s=o(d^{1/2})$,
		$n(k,d)=(1+o(1))2^{d-1}.$
		
		The proof of Theorem \ref{no} is quite simple, however, it relies on a fundamental result of Kleitman \cite{K} (conjectured by Erd\H{o}s) concerning the maximum size of sets with given diameter in a discrete hypercube (see Section 2). Actually, by a more detailed analysis one may derive slightly more precise upper bounds on $n(k,d)$ (see Subsection 2.7).
		
		In Section 3 we present further results on neighborly families of boxes and some consequences for the related problem of biclique coverings of complete graphs. We will use there some observations stemmed from the proof of the following fact, giving another precise value of the function $n(k,d)$.
		
		\begin{proposition}\label{Proposition n(d-1,d)}
			For every $d\geqslant 2$ we have
			\begin{equation}
				n(d-1,d)=3\cdot 2^{d-2}.
			\end{equation}
		\end{proposition}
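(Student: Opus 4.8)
The plan is to establish both bounds $n(d-1,d) \geqslant 3\cdot 2^{d-2}$ and $n(d-1,d) \leqslant 3\cdot 2^{d-2}$ separately.

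For the lower bound, the natural move is to give an explicit $(d-1)$-neighborly family of $3\cdot 2^{d-2}$ standard boxes in $\er^d$. I would build it combinatorially: recall that a $k$-neighborly family corresponds to a collection of vertices of the "box hypergraph" where two boxes $K,L$ fail to be $k$-neighborly exactly when they are disjoint (dimension $<d-k$ is impossible when $k=d-1$ unless they coincide in no coordinate, i.e. are disjoint, or equal) — so for $k=d-1$ the only constraint is that no two boxes are disjoint (they pairwise intersect) and no two are equal. In the word/code language used throughout the paper (boxes encoded by words over a suitable alphabet with a "disjointness" relation on letters), I would take the following construction: in coordinates $3,4,\dots,d$ use the full binary cube, giving a factor of $2^{d-2}$, and in the first two coordinates use a cleverly chosen family of $3$ "patterns" that pairwise intersect no matter what — e.g. three intervals/boxes in $\er^2$ that pairwise overlap. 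One checks that any two resulting boxes share at least the two coordinates where the $3$-pattern part overlaps, hence have dimension $\geqslant 2 > 0 = d-(d-1)$... wait, we need dimension $\leqslant d-1$ too, so they must differ somewhere; this is arranged since the binary-cube parts are distinct or, if equal, the $3$-patterns are distinct and only partially overlapping. I would verify the count $3\cdot 2^{d-2}$ and the neighborliness directly.

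For the upper bound, I would use the biclique-covering reformulation: $n(d-1,d)$ is the largest $N$ such that $K_N$ can be covered by $d$ bicliques with every edge covered at most $d-1$ times, equivalently (complementing) such that one can assign to each vertex a subset $S_v \subseteq [d]$ and a sign pattern so that every pair of vertices is "separated" by at least one coordinate but not by all $d$ coordinates — i.e., the associated vectors in $\{-1,0,+1\}^d$ (or the word encoding) must pairwise differ but never be "antipodal in all coordinates where both are nonzero in the disjoint sense." The key combinatorial fact I expect to need is a Graham–Pollak / Kleitman-type extremal statement: among $\{0,1\}$-vectors, or among the relevant codewords, one cannot have more than $3\cdot 2^{d-2}$ of them with the required pairwise-intersection property. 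Concretely, I would argue that the "forbidden configuration" is a pair of boxes that are disjoint, and disjointness in $\er^d$ for boxes means there is a coordinate $i$ where the two intervals are disjoint; encoding each box's $i$-th interval by a letter and declaring two letters "conflicting" if the intervals are disjoint, a $(d-1)$-neighborly family is exactly a set of words, one per box, that is an antichain-like object avoiding total conflict. The bound $3\cdot 2^{d-2}$ should then follow from an entropy/counting or direct inductive argument on $d$: project onto the last coordinate, split the family by the letter used there, and bound each part using the $(d-1)$-dimensional case.

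The main obstacle will be the upper bound, specifically pinning down the exact extremal count $3\cdot 2^{d-2}$ rather than just an exponential bound of the form $c\cdot 2^d$. A clean induction needs the right inductive hypothesis: projecting onto one coordinate gives pieces living in $\er^{d-1}$, but the neighborliness constraint interacts across the projection (two boxes agreeing in coordinate $d$ impose the full constraint on the remaining $d-1$ coordinates, while two boxes with overlapping-but-unequal intervals in coordinate $d$ already "use up" some budget). I would handle this by a careful case analysis on how many distinct intervals appear in the last coordinate and whether any two of them are disjoint; the worst case (three pairwise-overlapping intervals, none contained in another) yields $3$ classes each of which is an arbitrary $n(d-1,d-1)=2^{d-1}$-family — but one must then check the cross-class constraints reduce the total from $3\cdot 2^{d-1}$ to $3\cdot 2^{d-2}$, which is where the actual work lies. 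An alternative, possibly cleaner, route is to derive the upper bound from the general Theorem \ref{no} with a suitable choice of $t$: taking $k=d-1$ and $t=1$ gives $n(d-1,d)\leqslant 2^{d-1}+\sum_{i=0}^{\lceil(d-1)/2\rceil}\binom{d}{i}$, which is unfortunately too weak, so a bespoke argument genuinely seems necessary and I would expect the proof to proceed by the direct inductive/projection method sketched above.
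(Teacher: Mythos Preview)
Your lower-bound sketch is essentially on target and matches the paper's construction in spirit, though the paper's explicit family is slightly different: take $F_0=\{v\in\{0,1\}^d : v_1=0\}$ together with $F^*=\{x\in S^d : x_1=1,\ x_2=\ast,\ x_3\cdots x_d\in\{0,1\}^{d-2}\}$, for a total of $2^{d-1}+2^{d-2}=3\cdot 2^{d-2}$.

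Your upper bound, however, has a genuine gap. The inductive projection you propose does not close: as you yourself note, splitting along one coordinate can produce three classes each of potential size $n(d-1,d-1)=2^{d-1}$, and you have no mechanism to cut $3\cdot 2^{d-1}$ down to $3\cdot 2^{d-2}$. The cross-class constraints you allude to do not obviously enforce this halving, and you give no argument for it. The paper avoids this difficulty entirely by a different and much shorter decomposition.

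The paper's idea is to split $F\subseteq S^d$ not along a coordinate, but according to whether a string contains a joker. Let $F_0$ be the joker-free strings and $F^*=F\setminus F_0$. Two observations finish the proof. First, for distinct $x,y\in F$ the subcubes $H(x),H(y)\subseteq\{0,1\}^d$ are disjoint (since $d(x,y)\geqslant 1$), and each $H(x)$ with $x\in F^*$ has at least two points; hence $|F^*|\leqslant\frac12\,|H^d\setminus F_0|=\frac12(2^d-|F_0|)$, giving $|F|\leqslant 2^{d-1}+\frac12|F_0|$. Second, for $k=d-1$ the set $F_0\subseteq\{0,1\}^d$ contains no complementary (antipodal) pair, so $|F_0|\leqslant 2^{d-1}$. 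Combining yields $|F|\leqslant 2^{d-1}+2^{d-2}=3\cdot 2^{d-2}$. This joker/no-joker split is the missing idea in your proposal.
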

		
		Let us remark that following the application of Kleitman's theorem in the study of this problem described in the first arXiv version of the present paper, Cheng, Wang, Xu, and Yip \cite{ChWX} also obtained new upper bounds on $n(k,d)$. For some values of parameters $k$ and $d$ their bounds are better than ours. Interestingly, for small values of $d$ and $k$, they determined exact values of $n(k,d)$ (see Table \ref{table:Gurobi} and tables in \cite{ChWX}).
		
		We also provide a general construction of $k$-neighborly families of boxes which gives a substantial improvement of the existing lower bounds on $n(k,d)$, for every fixed $k\geqslant 2$ and sufficiently large $d$.
		
		\begin{theorem}\label{Theorem Lower Bound}
			Let $2\leqslant k \leqslant m \leqslant d$ be given integers such that $d\geqslant \binom{m}{k}-1+m$. Then
			\begin{equation}
				n(k,d)\geqslant \left(d-\binom{m}{k}+1\right)^k\cdot \frac{\binom{m}{k}}{m^k}.
			\end{equation}
			In consequence,
			\begin{equation}
				n(k,d)\geqslant (1-o(1))\frac{d^k}{k!}.
			\end{equation}
		\end{theorem}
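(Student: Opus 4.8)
The plan is to prove the inequality by an explicit construction of a $k$-neighborly family of boxes realizing the stated size, and then to deduce the asymptotic statement by letting the auxiliary parameter $m$ grow slowly with $d$. Throughout I work directly with axis-aligned boxes. Write $N=\binom{m}{k}$ and $D=d-N+1$, so that the hypothesis $d\geq\binom{m}{k}-1+m$ is exactly the inequality $D\geq m$. I would split the $d$ coordinate directions of $\er^d$ into one block $\mathcal{B}$ of $N-1$ directions and $m$ further blocks $\mathcal{C}_1,\dots,\mathcal{C}_m$, each of size $q:=\lfloor D/m\rfloor\geq 1$; these account for $(N-1)+mq\leq (N-1)+D=d$ directions, and any leftover directions are made inert by letting every box be the full interval there. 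The boxes of the family are indexed by pairs $(A,v)$, where $A\in\binom{[m]}{k}$ and $v\colon A\to\{0,1,\dots,q\}$, so there are $\binom{m}{k}(q+1)^k$ of them.

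To build the boxes I would use that $n(1,b)\geq b+1$, realized by an explicit ``staircase'' $1$-neighborly family sitting inside a fixed cube $[0,M]^b$. Put such a family of $N$ boxes on the block $\mathcal{B}$, identified through a bijection $S\mapsto B_S$ with $\binom{[m]}{k}$; then $B_S$ and $B_{S'}$ are degenerate in exactly one direction of $\mathcal{B}$ whenever $S\neq S'$, and in none when $S=S'$. Likewise, on each block $\mathcal{C}_a$ put a $1$-neighborly family $C_a^{(0)},\dots,C_a^{(q)}\subset[0,M]^q$. The box attached to $(A,v)$ is the product of $B_A$ with the factors $C_a^{(v(a))}$ for $a\in A$ and the full box $[0,M]^q$ for $a\notin A$. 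To verify neighborliness, fix distinct indices $(A,v),(A',v')$ and count the directions in which the two boxes are degenerate. A block $\mathcal{C}_a$ contributes exactly one such direction when $a\in A\cap A'$ and $v(a)\neq v'(a)$, and none otherwise --- crucially, when $a\notin A\cap A'$ one of the two factors is the full box, so there is no degeneracy in $\mathcal{C}_a$ at all. The block $\mathcal{B}$ contributes $1$ if $A\neq A'$ and $0$ if $A=A'$. Hence the total number of degenerate directions equals $|\{a\in A\cap A':v(a)\neq v'(a)\}|$ when $A=A'$, which lies between $1$ (since $v\neq v'$) and $|A|=k$; and it equals $1+|\{a\in A\cap A':v(a)\neq v'(a)\}|\leq 1+|A\cap A'|\leq k$ when $A\neq A'$, using $|A|=|A'|=k$. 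Since all boxes are sub-boxes of $[0,M]^d$ that pairwise intersect blockwise, the intersections are nonempty, so this total is exactly the codimension of $K^{(A,v)}\cap K^{(A',v')}$, and it is always between $1$ and $k$. Thus the family is $k$-neighborly, giving $n(k,d)\geq\binom{m}{k}(q+1)^k\geq\binom{m}{k}(D/m)^k=\bigl(d-\binom{m}{k}+1\bigr)^k\cdot\binom{m}{k}/m^k$, since $q+1=\lfloor D/m\rfloor+1\geq D/m$.

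For the asymptotic consequence, fix $k\geq 2$ and let $d\to\infty$. I would choose $m=m(d)\to\infty$ slowly enough that $\binom{m}{k}=o(d)$, for instance $m=\lceil d^{1/(2k)}\rceil$; then $k\leq m\leq d$ and $d\geq\binom{m}{k}-1+m$ hold for all large $d$, one has $D=d-\binom{m}{k}+1=(1-o(1))d$, and $\binom{m}{k}/m^k=\tfrac{1}{k!}\prod_{j=0}^{k-1}(1-j/m)\to\tfrac{1}{k!}$. Substituting into the bound above yields $n(k,d)\geq(1-o(1))\,d^k/k!$.

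The one genuinely delicate design choice --- and the step I expect to need the most care --- is the use of the full box $[0,M]^q$ in the blocks $\mathcal{C}_a$ with $a\notin A$: this is precisely what caps the degeneracy coming from the blocks $\mathcal{C}_\bullet$ at $|A\cap A'|\leq k-1$ for boxes with different index sets, leaving exactly one unit of ``degeneracy budget'' for the single separating direction in $\mathcal{B}$ supplied by a tight Graham--Pollak/Zaks family on $\binom{m}{k}-1$ coordinates. Once this structure is in place, everything else --- the coordinate count, the nonemptiness of the intersections, and the choice of $m$ --- is routine bookkeeping.
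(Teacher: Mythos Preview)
Your proof is correct and follows essentially the same construction as the paper: a $1$-neighborly family of size $\binom{m}{k}$ on $\binom{m}{k}-1$ coordinates to separate the index sets $A$, combined with $1$-neighborly families on $m$ small blocks where the blocks outside $A$ are filled with the full interval (jokers, in the paper's string language). The only cosmetic differences are that the paper works with strings in $S^d$ rather than boxes and allows the $m$ block sizes $a_1,\dots,a_m$ to be unequal before choosing them nearly equal, whereas you fix them all to $q=\lfloor D/m\rfloor$ from the start.
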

		
		For instance, for $k=2$ we get $n(2,d)\geqslant (1-o(1))\frac{d^2}{2}$, which improves the previous lower bound, $n(2,d)\geqslant \frac{d^2}{4}$. As mentioned earlier by Alon \cite{Al} and Huang and Sudakov \cite{HS}, determining the precise asymptotic order of $n(2,d)$ seems to be a hard task. In the final section we pose a general conjecture which, together with the above lower bound would imply that $$\lim_{d\rightarrow \infty}\frac{n(2,d)}{d^2}=\frac{1}{2}.$$
		However, as yet we do not even know whether the limit above exists.
		
		\section{Proofs of the main results}
		
		\subsection{Sets of maximum size and given diameter in the Hamming cube}
		Let $H^d=\{0,1\}^d$ denote the \emph{Hamming cube} of dimension $d$, that is, the set of binary \emph{strings} of length $d$ representing the vertices of the cube, with the \emph{Hamming distance} between two vertices defined by $h(x,y)=|\{i\in [d]\colon x_i\neq y_i\}|$. Two strings $x,y\in H^d$ are called \emph{complementary} if $x_i\neq y_i$ for all $i\in [d]$. For a subset $A\subseteq H^d$, let $D(A)=\max\{h(x,y)\colon x,y\in A\}$ denote the \emph{diameter} of $A$. By $B_t(x)\subseteq H^d$ we denote the \emph{ball} of radius $t$ centered at $x\in H^d$. Notice that the size of $B_t(x)$ is equal to $\sum_{i=0}^{t}\binom{d}{i}$, while the diameter $D(B_t(x))$ equals $2t$, for $t<d/2$.
		
		In 1961 Erd\H{o}s conjectured that no set of diameter $2t\leqslant d-1$ in $H^d$ can have more points than the ball $B_t(x)$. This was confirmed by Kleitman \cite{K} in 1966. As observed by Bezrukov \cite{B}, a similar assertion holds for sets of odd diameters, with an extremal example being formed of two balls $B_t(x)\cup B_t(y)$ centered at any two points $x,y\in H^d$, with $h(x,y)=1$. Bezrukov \cite{B} also proved that there are no other extremal sets as long as $k<d-1$. If $k=d-1$, then, by elementary reasoning, every extremal set consists of exactly one element from every complementary pair.
		
		\begin{theorem}[Kleitman \cite{K}, Bezrukov \cite{B}]\label{Theorem Kleitman}
			Let $A\subseteq H^d$ be a set of diameter $k\leqslant d-1$ and maximum size. If $k=2t$, then
			\begin{equation}\label{Eq Kleitman 1}
				|A|\leqslant \sum_{i=0}^t\binom{d}{i}.
			\end{equation}
			If $k=2t+1$, then
			\begin{equation}\label{Eq Kleitman 2}
				|A|\leqslant \binom{d-1}{t}+ \sum_{i=0}^t\binom{d}{i}.
			\end{equation}
		\end{theorem}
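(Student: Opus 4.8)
The plan is to prove both inequalities simultaneously by induction on $d$, carrying through a strengthened statement about \emph{pairs} of families. It is convenient to normalize $A$ first: applying in each coordinate $j$ the down-compression that replaces a set $S\in A$ containing $j$ by $S\setminus\{j\}$ whenever $S\setminus\{j\}\notin A$ preserves $|A|$, and a short case check --- split according to whether the two compared sets were moved, and in the mixed case use that a moved set's pre-image together with the other set already lies in $A$ --- shows it does not increase $D(A)$; after compressing in every coordinate one may assume $A$ is a down-set, a property preserved by the inductive step. The base case is $d=k+1$, handled by the elementary argument recalled in the text: two complementary sets lie at Hamming distance $d=k+1>k$, so $A$ meets each of the $2^{d-1}$ complementary pairs in at most one set, whence $|A|\le 2^{d-1}$; and $2^{d-1}$ equals $\sum_{i=0}^{t}\binom di$ when $k=2t$ and $\binom{d-1}{t}+\sum_{i=0}^{t}\binom di$ when $k=2t+1$, by the symmetry of a row of Pascal's triangle.

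For the inductive step ($d>k+1$, so $k\le d-2$) I would split $A$ on the last coordinate: with $A_0=\{S\in A:d\notin S\}$ and $A_1=\{S\subseteq[d-1]:S\cup\{d\}\in A\}$, both families in $2^{[d-1]}$, one has $|A|=|A_0|+|A_1|$, while $D(A_0)\le k$, $D(A_1)\le k$, and --- the key point --- the cross-diameter $D(A_0,A_1):=\max\{|F\triangle G|:F\in A_0,\ G\in A_1\}$ is at most $k-1$, because for such $F,G$ the sets $F$ and $G\cup\{d\}$ both lie in $A$ and differ in $|F\triangle G|+1$ coordinates. Merely applying the inductively known bounds to $A_0$ and to $A_1$ separately overcounts by a binomial term (namely $\binom{d-2}{t-1}$ when $k=2t$), because it discards the interaction of $A_0$ and $A_1$ inside $2^{[d-1]}$. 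The remedy, which is the technical core of Kleitman's argument, is to run the whole induction on a two-family inequality: for $A,B\subseteq 2^{[m]}$ with $D(A)$, $D(B)$, $D(A,B)$ bounded in a suitably balanced fashion, one bounds $|A|+|B|$ by a sum of two ``near-ball'' quantities in dimension $m$, the original statement being the diagonal case $A=B$. The recursive step splits both $A$ and $B$ on a coordinate into four pieces, recombines them into two new pairs whose parameters are governed by the cross-diameter gains of the split, applies the inductive hypothesis, and checks that the right-hand side collapses using the Pascal identity $\binom{m-2}{t}+\binom{m-2}{t-1}=\binom{m-1}{t}$ (and its odd analogue). I expect the main obstacle to be exactly the calibration of this two-family statement --- the relation imposed on the three diameters, the two radii on the right-hand side, and the rule for pairing the four sub-families all have to be tuned so that the recursion is self-sustaining, and essentially every cruder version leaks a lower-order term, which is precisely why the naive split falls short.

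The term $\binom{d-1}{t}$ in the odd case arises from exactly the one-coordinate gain in the cross-diameter during the split, and the uniqueness of extremal configurations --- a single ball $B_t(x)$ when $k=2t$, a pair $B_t(x)\cup B_t(y)$ with $h(x,y)=1$ when $k=2t+1$, and, in the boundary case $k=d-1$, any transversal of the complementary pairs --- is obtained by tracking when equality can hold in each compression and in each step of the recursion; this refinement is due to Bezrukov. Since only the inequalities \eqref{Eq Kleitman 1} and \eqref{Eq Kleitman 2} are needed below, I would present the bounds and refer to \cite{K,B} for the complete equality analysis.
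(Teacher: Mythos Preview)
The paper does not supply a proof of Theorem~\ref{Theorem Kleitman}: it is quoted as a classical result of Kleitman \cite{K} (for the bounds) and Bezrukov \cite{B} (for the characterization of extremal sets), and is invoked only as a tool in the proof of Theorem~\ref{no}. There is therefore no in-paper argument to compare your proposal against.

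As a standalone sketch, your outline follows Kleitman's original strategy: split on a coordinate, observe the one-unit drop in cross-diameter between $A_0$ and $A_1$, and carry a two-family strengthening through the induction because the naive recursion overcounts by a binomial term. Two remarks. First, the down-compression paragraph is inessential to the two-family induction, and your one-line justification of the mixed case is not quite complete: when the unmoved set $T$ also contains the coordinate $j$, knowing only that the pre-image $S$ and $T$ both lie in $A$ gives $h(S,T)\leqslant k$, but $h(S\setminus\{j\},T)=h(S,T)+1$; you need the additional fact that $T\setminus\{j\}\in A$ (which holds precisely because $T$ was not moved) to close this sub-case. Second, and more substantively, you yourself flag that the calibration of the two-family inequality is left open; since that calibration \emph{is} the proof, what you have written is an accurate roadmap rather than an argument. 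Your closing suggestion --- state the bounds and refer to \cite{K,B} --- is exactly what the paper does.
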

		
		\subsection{Neighborly boxes, bipartite coverings, and squashed cubes}
		
		Let $S=\{0,1,\ast\}$ be a set of symbols and let $S^d$ be the set of all strings of length $d$ over $S$. The elements of $S^d$ can be interpreted as vertices of the Hamming cube $H^d$ with some positions replaced by the new symbol $\ast$, called \emph{joker}. In this way, any string $x \in S^d$ defines a subcube $H(x)$ of $H^d$, where $H(x)$ consists of all strings obtained by changing all the jokers in $x$ to $0$ or $1$ in all possible ways. The number of jokers in $x$, denoted as $j(x)$ is the dimension of the subcube $H(x)$, and its cardinality is $2^{j(x)}$.
		
		For, $x,y\in S^d$, let $d(x,y)$ denote the number of positions in which one string has $0$ and the other has $1$. Note that if $d(x,y) \geqslant 1$, then the subcubes $H(x)$ and $H(y)$ are pairwise disjoint. Note also that if the dimension of $H(x)$ is $i$ and that of $H(y)$ is $j$, then any two binary strings, $u \in H(x)$ and $v \in H(y)$, differ in at most $d(x,y)+i+j$ coordinates. 
		
		Another interpretation of strings in $S^d$ is connected to a special family $\mathscr{S}^d$ of standard boxes formed of just three different intervals, $[-1,0]$, $[0,1]$, and $[-1,1]$. Given such a box $K=K_1\times \cdots \times K_d$, with $K_i$ being one of those three intervals, we may assign to it a unique string $v(K)=v_1\cdots v_d$ in $S^d$ by the formula:
		$$v_i=
		\begin{cases}
			0, & \text{if $K_i=[-1,0]$},\\
			1, & \text{if $K_i=[0,1]$},\\
			\ast, & \text{if $K_i=[-1,1]$.}
		\end{cases}
		$$ 
		
		It is not hard to verify that two boxes $K,L\in \mathscr{S}^d$ are $k$-neighborly if and only if the corresponding strings, $v(K)$ and $v(L)$, differ on at least one, and on at most $k$ non-joker positions (in both strings). In other words, if we add up the two strings $v(K)$ and $v(L)$ as vectors, with additional rule that $i+\ast=\ast$, for each $i\in S$, then the resulting string $v(K)+v(L)$ should have its number of $1$'s in the interval $[1,k]$.
		
		Moreover, it can be easily demonstrated that for every family $\mathscr{F}$ of $k$-neighborly standard boxes in $\er ^d$ there is an equivalent family of boxes in $\mathscr{S}^d$ (preserving dimensions of all intersections of members of the family $\mathscr{F}$). A detailed argument can be found in \cite{Al}. Hence, the problem of determining the function $n(k,d)$ can be reduced to families of boxes in $\mathscr{S}^d$ and studied through their representing strings in $S^d$.
		
		A connection between $S^d$ and bipartite coverings is obtained by assigning to each vertex $v$ of a complete graph covered by $d$ bicliques with vertex classes $(L_i,R_i)$, $1 \leqslant i \leqslant d$, a string in $S^d$, whose $i$-th coordinate is $0$ if $v \in L_i$, it is $1$ if $v \in R_i$ and it is $*$ if $v \not \in L_i \cup R_i$. Therefore, $n(k,d)$ is the maximum possible number of strings in a family $F \subseteq S^d$ so that $1 \leqslant d(x,y) \leqslant k$ holds for any two distinct $x,y \in F$.
		
		\subsection{Proof of Proposition \ref{Proposition n(d-1,d)}}
		Let $\mathscr{F}$ be a family of $k$-neighborly boxes in $\mathscr{S}^d$, or, equivalently, a family $F$ of strings in $S^d$ such that $1\leqslant d(x,y)\leqslant k$, for each pair $x,y\in F$. Assume that the size of $F$ is as large as possible. Denote by $F^{\ast}$ the subset of $F$ consisting of all strings with at least one joker, and set $F_0=F\setminus F^{\ast}$.
		
		As explained above, every string $x\in F$ corresponds to a subcube $H(x)$ of $H^d$ whose dimension is exactly the number of jokers in $x$. Since every pair of strings in $F$ differs on at least one non-joker position, these subcubes are pairwise disjoint. Hence, the number of strings in $F^{\ast}$ cannot be greater than half of the total number of vertices spanned by the corresponding subcubes. Consequently,
		\begin{equation}
			|F^{\ast}|\leqslant \frac{1}{2}\cdot|H^d\setminus F_0|.
		\end{equation}
		So, we may write
		\begin{equation}
			|F|=|F_0|+|F^{\ast}|\leqslant |F_0|+ \frac{1}{2}\cdot(2^d- |F_0|)=2^{d-1}+\frac{1}{2}\cdot|F_0|.
		\end{equation}
		
		Suppose now that $k=d-1$ and $F$ is a $k$-neighborly family. Then $F_0$ cannot contain any complementary pair of strings from $H^d$, and therefore $|F_0|\leqslant \frac{1}{2}\cdot |H^d|=2^{d-1}$. In this way we get
		\begin{equation}
			|F|\leqslant 2^{d-1}+2^{d-2}=3\cdot 2^{d-2}.
		\end{equation}
		On the other hand, one may easily produce a family $F$ matching this upper bound. For instance, one may take $F_0=\{v\in H^d: v_1=0\}$ and $F^*=\{x\in S^d:x_1=1, x_2=\ast, x_3\cdots x_d\in H^{d-2}\}$, which completes the proof. \hfill{$\square$}
		
		\subsection{Proof of Theorem \ref{no}}
		The proof of inequality (\ref{Main Theorem eq1}) is very simple. Indeed, the family $F$ of all binary strings of length $d$ in a Hamming ball of radius $\lfloor k/2 \rfloor$ is a collection of $\sum_{i=0}^{\lfloor k/2 \rfloor} {d \choose i}$ binary strings. These can be viewed as strings in $S^d$ (containing no jokers). For any two distinct $x,y \in F$, we have $1 \leqslant d(x,y) \leqslant 2\lfloor k/2 \rfloor \leqslant k$, which implies (\ref{Main Theorem eq1}).
		
		The proof of inequality (\ref{Main Theorem eq2}) is also short, by using Theorem \ref{Theorem Kleitman}. Let $F \subseteq S^d$ be a family of strings so that for any two distinct $x,y \in F$, we have
		$1 \leqslant d(x,y) \leqslant k$. For each $i$, $1 \leqslant i \leqslant d$, let $F_i$ be the subset of $F$ consisting of all strings in $F$ whose number of jokers is exactly $i$. Let $f_i=|F_i|$ be the cardinality of $F_i$.
		
		Since every subcube $H(x)$, for $x \in F_i$, contains $2^i$ points, and all these subcubes
		are pairwise disjoint, it follows that
		\begin{equation}
			\label{e21}
			\sum_{i=0}^d 2^i f_i \leqslant 2^d~~\mbox{and therefore, for every}~~
			t, ~~ \sum_{i=t}^d f_i \leqslant 2^{d-t}.
		\end{equation}
		Fix a  positive integer $t$ so that $k+2t-2<d$. For each $i$, $0 \leqslant i \leqslant t-1$, and for each $x \in F_i$, let $x'$ be an arbitrary binary string in $H(x)$. (For example, one can define $x'$ to be the string obtained from $x$ by replacing each of its jokers by $0$.) Then all these strings $x'$ corresponding to members $x \in	\bigcup_{i=0}^{t-1} F_i$ are distinct, and every two of them
		differ in at most $k+2t-2$ positions. It thus follows from Theorem \ref{Theorem Kleitman} that
		\begin{equation}
			\label{e22}
			f_0+f_1+ \ldots +f_{t-1} \leqslant \sum_{i=0}^{\lceil (k+2t-2)/2 \rceil}  {d \choose i}.
		\end{equation}
		By (\ref{e21}) and (\ref{e22}), we obtain
		$$
		|F| = \sum_{i=0}^d f_i =(f_0+f_1+ \ldots f_{t-1})
		+\sum_{i=t}^d f_i \leqslant \sum_{i=0}^{\lceil (k+2t-2)/2 \rceil}  {d \choose i}
		+2^{d-t},
		$$
		which gives the inequality (\ref{Main Theorem eq2}). The proof is complete. \hfill{$\square$}
		
		\subsection{Proof of Corollary \ref{Cor1}}In (\ref{Main Theorem eq2}) of Theorem \ref{no}
		choose $t=\frac{\varepsilon d}{4}$ and apply the
		standard estimates for binomial distributions (cf., e.g.,
		\cite{AS}, Theorem A.1.13) to conclude 
		that the assertion of the corollary holds with
		$\delta = \Omega(\varepsilon^2)$.
		
		\subsection{Proof of Corollary \ref{Cor2}}
		Put $k=d-s$. By assumption $d$ and $s$ tend to infinity,
		and $s=o(d^{2/3})$. If $s \leqslant 10 \sqrt d$, apply
		inequality (\ref{Main Theorem eq2}) of Theorem \ref{no}, with $t=2 \log s$ (in fact
		any $t$ tending to infinity and satisfying $t=o(s)$ will do)
		to obtain the desired result.
		
		If $s \geqslant 10 \sqrt d$ (and $s=o(d^{2/3})$), apply (\ref{Main Theorem eq2}) of Theorem \ref{no} with $t=10s^2/d$. By assumption $t=o(d/s)$ and it is not difficult to verify that this implies that
		$$
		\sum_{i=0}^{\lceil (k+2t-2)/2 \rceil}  {d \choose i}
		=(1+o(1)) \cdot \sum_{i=0}^{\lfloor k/2 \rfloor} {d \choose i}.
		$$
		It is also not too difficult to check that with this choice of $t$,
		$$
		2^{d-t} =o\left(\sum_{i=0}^{\lfloor k/2 \rfloor} {d \choose i}\right).
		$$
		The desired result follows by plugging these two inequalities in (\ref{Main Theorem eq2}) of Theorem \ref{no}.
		\hfill $\Box$
		
		\subsection{An estimation of $n(k,d)$ by a combination of sizes of maximal sets}
		
		Suppose that $F$ is a family of $k$-neighborly subcubes of $H^d$. Again, let $F_i$ be the subfamily of $F$ that consists of all subcubes of dimension $i$ or less. Let $f_j$ be the number of all subcubes of dimension $j$ belonging to $F$. Then the number of elements of $H^d $ covered by $F_i$ is equal $f_0+2f_1+\cdots + 2^if_i$. Moreover, the Hamming distance between any two such elements does not exceed $k+2i$.
		
		Let $\kappa (s, d)$ denotes the maximum size of sets whose diameter is not greater than $s$. In particular, $\kappa(s,d)=2^d$, for $s\geqslant d$. Then 
		\[
		f_0+2f_1+\cdots +2^if_i\leqslant \kappa(k+2i,d).  \eqno{(\kappa_i)}
		\]
		A trivial constraint
		\[
		f_0+2f_1+\cdots + 2^{d-1} f_{d-1}\leqslant 2^d. \eqno{(\tau)}
		\]
		is identical with $(\kappa_i)$ for $i=d-1$ and $k\geqslant 2$.
		
		We want to bound from above $|F|=f_0+f_1+\ldots+f_{d-1}$. It can be achieved by considering an integer optimization problem where the objective is to maximize $f_0+f_1+\ldots+f_{d-1}$ subject to the constraints given by 
		$(\kappa_i)$. Among all the optimal solutions, let us choose the one, say $f_0=a_0, \ldots, f_{d-1}=a_{d-1}$,  which is maximal with respect to the lexicographical order. We claim that it satisfies the following inequalities:
		\[
		\kappa(k+2i,d) < f_0+2f_1+\cdots +2^if_i +2^i,  \eqno{(\kappa^+_i)}.
		\]
		
		If not, let us choose $i=s$ for which ($\kappa^+_i$) does not hold. Then, we can define a new sequence $(b_i\colon i=1,\ldots, d-1)$ as follows.
		\begin{itemize}
			\item Let us set  $b_s =a_s+1$, and let $t>s$ be the first index, if there is any, for which $a_t>0$.
			\item Let us set $b_t=a_t-1$, and for all the remaining indices $u$, let $b_u=a_u$. 
		\end{itemize}
		It is rather clear that since $(\kappa(k+2i,d)\colon i\leqslant d-1)$ is increasing and $2^s< 2^t$, the resulting sequence satisfies all the constraints. Moreover, it also has to be optimal. On the other hand, it is larger with respect to the lexicographical order, which is a contradiction.
		
		Observe now that if $d-k$ is divisible by $2$, then plugging $i=\frac{d-k}2$ into $(\kappa^+_i)$ gives us
		$$
		2^d  <  f_0+2f_1+\cdots +2^{\frac{d-k}2} f_{\frac{d-k}2} +2^{\frac{d-k}2}, 
		$$
		which readily implies, that $a_j=0$ for $j>{\frac{d-k}2}$.  If $d-k$ is not divisible by $2$, then plugging $i=\llfloor\frac{d-k}2\rrfloor$ into $(\kappa^+_i)$  and $(\kappa_i)$ yields
		$$
		f_0+2f_1+\cdots +2^{\llfloor\frac{d-k}2\rrfloor} f_{\llfloor\frac{d-k}2\rrfloor}\leqslant\kappa(d-1,d) = 2^{d-1}  <  f_0+2f_1+\cdots +2^{\llfloor\frac{d-k}2\rrfloor} f_{\llfloor\frac{d-k}2\rrfloor} +2^{\llfloor\frac{d-k}2\rrfloor}. 
		$$
		Since the sequence $f_i=a_i$  is maximal with respect to the lexicographical order, and satisfies the constraints including ($\tau$), we deduce that 
		$$
		a_{\llfloor\frac{d-k}2\rrfloor+1}=2^{d -\llfloor\frac{d-k}2\rrfloor-2}, \eqno{(\lambda)} 
		$$
		and $a_j=0$  for $j > \llfloor\frac{d-k}2\rrfloor+1$.
		
		Now we are ready to estimate the optimal value of the objective function. Suppose that $0\le i-1< i\leqslant \frac{d-k}2$.  Then, by ($\kappa^+_{i-1}$) and ($\kappa_i$), we obtain
		$$
		\kappa(k+2i,d) -\kappa(k+2(i-1),d)\geqslant 2^i a_i - 2^{i-1}.
		$$
		Consequently,
		$$
		a_i < \frac 1 {2^i}\kappa(k+2i,d) -\frac 1{2^i}\kappa(k+2(i-1),d) +\frac 1 2
		$$ 
		For $x \in \mathbb R$, let us denote $\lfloor x \rceil=\lceil x- 1\rceil$. Since $a_i$ is an integer, we have  
		$$
		a_i\leqslant \left\lfloor \frac 1 {2^i}\kappa(k+2i,d) -\frac 1{2^i}\kappa(k+2(i-1),d) +\frac 1 2\right\rceil.
		$$
		Suppose first that $k$ is even. Since, by Theorem \ref{Theorem Kleitman}, $\kappa(s,d)=\sum_{j=0} ^{\frac{s}{2}} \binom{d}{j}$, whenever $s < d$ and $s$ is even, we have 
		$$
		\kappa(k+2i,d) -\kappa(k+2(i-1),d) =\binom{d}{\frac{k}2+i}.
		$$  
		Therefore, 
		$$
		a_i\leqslant \left\lfloor \frac 1 {2^i}\binom{d}{\frac k 2 +i} +\frac 1 2\right\rceil \leqslant \frac 1 {2^i}\binom{d}{\frac k2+i} +\frac 1 2, \eqno{(\eta)}
		$$
		whenever $1 \leqslant i < \frac{d-k} 2$. If  $i=\frac{d-k} 2$, that is, also $d$ is even, then
		$$
		\kappa(k+2i,d) -\kappa(k+2(i-1),d) = \kappa(d,d) -\kappa(d-2,d) =2^d-\sum_{s=0}^{\frac{d-2}2}\binom{d}s=
		2^{d-1}+\frac 1 2 \binom{d}{\frac d 2}.
		$$  
		Thus,  
		$$
		a_{\frac{d-k}{2}}\leqslant \left\lfloor 2^{\frac{d+k} 2-1} + \frac 1 {2^{\frac{d-k}2 +1}} \binom{d}{\frac{d}2} +\frac 1 2\right\rceil.\eqno{(\eta')}
		$$
		Suppose now that  $k$ is odd. Since, by Theorem \ref{Theorem Kleitman}, $\kappa(s,d)=\binom{d-1} {\llfloor\frac s2\rrfloor}+ \sum_{j=0} ^ {\llfloor\frac s2\rrfloor} \binom{d}{j}$, whenever $s < d$ and $s$ is odd, we have
		\begin{eqnarray*}
			\kappa(k+2i,d) -\kappa(k+2(i-1),d)  &= & \binom{d-1}{\llfloor\frac k2\rrfloor+i} - \binom{d-1}{\llfloor\frac k2\rrfloor+i-1} + \binom{d}{\llfloor\frac k2\rrfloor+i}\\
			&=& 2\binom{d-1}{\llfloor\frac k2\rrfloor+i},
		\end{eqnarray*}
		whenever $1\leqslant i <\frac{d-k}2$.
		Therefore, 
		$$
		a_i\leqslant \left\lfloor \frac 1 {2^{i-1}}\binom{d-1}{\llfloor\frac k2\rrfloor+i} +\frac 1 2 \right\rceil \leqslant \frac 1 {2^{i-1}}\binom{d-1}{\llfloor\frac k2\rrfloor+i} +\frac 1 2,  \eqno{(\omega)}
		$$
		whenever $1 \leqslant i < \frac{d-k} 2$. If $i=\frac{d-k} 2$, that is, also $d$ is odd 
		$$
		\kappa(k+2i,d) -\kappa(k+2(i-1),d)  
		=  2^d- \binom{d-1}{\llfloor\frac{d-2}2\rrfloor}-\sum_{s=0}^{\llfloor\frac{d-2}2\rrfloor} \binom{d}{s} 
		= 2^{d-1}+\binom{d-1}{\frac {d-1} 2}.
		$$
		Thus,
		$$
		a_{\frac{d-k}{2}}\leqslant \left\lfloor 2^{\frac{d+k} 2-1} + \frac 1 {2^{\frac{d-k}2}} \binom{d-1}{\frac{d-1}2} +\frac 1 2\right\rceil. \eqno{(\omega')}
		$$
		
		\textbf{If $k$ and $d$ are even}, then summing up $a_i$,  with respect to $i=1,\ldots,  \frac {d-k} 2$ and bearing in mind that $a_0\leqslant \kappa(k,d)$ by $(\eta)$ and $(\eta')$ we obtain
		\begin{equation}
			\label{2even}
			\begin{split}
				n(k,d) \leqslant& \sum_i a_i \le \sum_{s=0}^{\frac{k}2} \binom{d}{s}+  \sum_{i=1}^{\frac{d-k}2-1} \left\lfloor \frac 1 {2^i}\binom{d}{\frac k2+i} +\frac 1 2\right\rceil   + \left\lfloor 2^{\frac{d+k} 2-1} + \frac 1 {2^{\frac{d-k}2 +1}} \binom{d}{\frac{d}2} +\frac 1 2\right\rceil \\ 
				\leqslant&  \frac{d-k}4 +2^{\frac{d+k} 2-1} + \frac 1 {2^{\frac{d-k}2 +1}} \binom{d}{\frac{d}2}+ \sum_{s=0}^{\frac{k}2} \binom{d}{s}+  \sum_{i=1}^{\frac{d-k}2 -1} \frac 1 {2^i}\binom{d}{\frac k2+i}. 
			\end{split}
		\end{equation}
		
		\textbf{If $k$ is even and $d$ is odd}, then taking into account $(\lambda)$ we get 
		\begin{equation}
			\begin{split}
				n(k,d) 
				\leqslant &  2^{d -\llfloor\frac{d-k}2\rrfloor-2} + \sum_{s=0}^{\frac{k}2} \binom{d}{s}+  \sum_{i=1}^{\llfloor\frac{d-k}2\rrfloor} \left\lfloor\frac 1 {2^i}\binom{d}{\frac k2+i} +\frac 1 2\right\rceil \\
				\leqslant &  \frac 1 2\llfloor\frac{d-k}2\rrfloor +2^{d -\llfloor\frac{d-k}2\rrfloor-2} +\sum_{s=0}^{\frac{k}2} \binom{d}{s}+  \sum_{i=1}^{\llfloor\frac{d-k}2\rrfloor} \frac 1 {2^i}\binom{d}{\frac k2+i}.
			\end{split}
		\end{equation}
		
		\textbf{If $k$ is odd and $d$ is even}, then by $(\omega)$, $(\lambda)$ and the expression on $\kappa(k,d)$ we get
		\begin{equation}
			\begin{split}
				n(k,d) 
				\leqslant &  2^{d -\llfloor\frac{d-k}2\rrfloor-2} + \binom{d-1} {\llfloor\frac k2\rrfloor}+ \sum_{s=0} ^ {\llfloor\frac k 2\rrfloor} \binom{d}{s}+  
				\sum_{i=1}^{\llfloor\frac{d-k}2\rrfloor} \left\lfloor \frac 1 {2^{i-1}}\binom{d-1}{\llfloor\frac k2\rrfloor+i} +\frac 1 2\right\rceil \\
				\leqslant &  \frac 1 2\llfloor\frac{d-k}2\rrfloor +2^{d -\llfloor\frac{d-k}2\rrfloor-2} + \binom{d-1} {\llfloor\frac k2\rrfloor}+\sum_{s=0}^{\llfloor\frac{k}2\rrfloor} \binom{d}{s}+  \sum_{i=1}^{\llfloor\frac{d-k}2\rrfloor} \frac 1 {2^{i-1}}\binom{d-1}{\llfloor\frac k 2\rrfloor+i}.
			\end{split}
		\end{equation}
		
		\textbf{If both $k$ and $d$ are odd}, then we proceed similarly to the case $k$ and $d$ are even, however, we have to apply $(\omega)$ and $(\omega')$ instead of $(\eta)$  and $(\eta')$:
		\begin{equation}
			\begin{split}
				n(k,d) 
				\leqslant & \binom{d-1} {\llfloor\frac k2\rrfloor}+ \sum_{s=0} ^ {\llfloor\frac k 2\rrfloor} \binom{d}{s}+  
				\sum_{i=1}^{\frac{d-k}2-1} \left\lfloor \frac 1 {2^{i-1}}\binom{d-1}{\llfloor\frac k2\rrfloor+i} +\frac 1 2\right\rceil  + \left\lfloor 2^{\frac{d+k} 2-1} + \frac 1 {2^{\frac{d-k}2}} \binom{d-1}{\frac{d-1}2} +\frac 1 2\right\rceil \\
				\leqslant &  \frac{d-k}4  + \binom{d-1} {\llfloor\frac k2\rrfloor}+2^{\frac{d+k} 2-1} + \frac 1 {2^{\frac{d-k}2}} \binom{d-1}{\frac{d-1}2}+\sum_{s=0}^{\llfloor\frac{k}2\rrfloor} \binom{d}{s}+  \sum_{i=1}^{\frac{d-k}2} \frac 1 {2^{i-1}}\binom{d-1}{\llfloor\frac k 2\rrfloor+i}.
			\end{split}
		\end{equation}
		
		\subsection{Another way of getting the lower bound in (\ref{Eq Alon})}
		
		We will prove a general result, which immediately implies the lower bound for $n(k,d)$ given in \cite{Al}.
		\begin{pr}\label{Proposition Lower Bound}
			Let $1\leqslant k_i\leqslant d_i$, $i=1,\ldots, s$, be given integers. Then
			\begin{equation}
				n(k_1,d_1)n(k_2,d_2)\cdots n(k_s,d_s)\leqslant n(k_1+\cdots +k_s, d_1+\cdots +d_s). 
			\end{equation} 
		\end{pr}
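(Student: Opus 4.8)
The plan is to work with the string description of $n(k,d)$ recalled in Subsection 2.2 and to form a product family by concatenation. Recall that $n(k,d)$ equals the largest size of a family $F\subseteq S^d$ such that $1\leqslant d(x,y)\leqslant k$ for every two distinct $x,y\in F$. So first I would fix, for each $i\in\{1,\ldots,s\}$, an extremal family $F_i\subseteq S^{d_i}$ with $|F_i|=n(k_i,d_i)$ and $1\leqslant d(x,y)\leqslant k_i$ for distinct $x,y\in F_i$, and set $d=d_1+\cdots+d_s$ and $k=k_1+\cdots+k_s$.

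Next I would define $F\subseteq S^d$ to be the set of all concatenations $x^{(1)}x^{(2)}\cdots x^{(s)}$ with $x^{(i)}\in F_i$ for each $i$. The concatenation map $F_1\times\cdots\times F_s\to S^d$ is injective, so $|F|=\prod_{i=1}^s|F_i|=\prod_{i=1}^s n(k_i,d_i)$. The remaining task is to check that $F$ is admissible in the above sense, which will give $n(k,d)\geqslant |F|$.

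For the admissibility check, the key observation is that $d(\cdot,\cdot)$ counts coordinates on which one string carries $0$ and the other carries $1$, and hence is additive over the $s$ blocks: $d(x,y)=\sum_{i=1}^s d\bigl(x^{(i)},y^{(i)}\bigr)$ for $x=x^{(1)}\cdots x^{(s)}$ and $y=y^{(1)}\cdots y^{(s)}$. A block with $x^{(i)}=y^{(i)}$ contributes $0\leqslant k_i$, while a block with $x^{(i)}\neq y^{(i)}$ contributes something in $[1,k_i]$ since $F_i$ is admissible. If $x\neq y$, at least one block is of the second kind, so $d(x,y)\geqslant 1$; summing the per-block upper bounds gives $d(x,y)\leqslant\sum_{i=1}^s k_i=k$. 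Thus $F$ witnesses the desired inequality.

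There is no genuine obstacle in this argument; the only point worth flagging is that membership in a $k_i$-neighborly family already forces distinct strings of $F_i$ to differ on at least one non-joker coordinate, and this is precisely what makes the lower bound $d(x,y)\geqslant 1$ survive concatenation. To recover the lower bound of \cite{Al} from the proposition, one applies it with $s=k$, all $k_i=1$, and $d_i=\lfloor (d+i-1)/k\rfloor$ (so that $\sum_i d_i=d$ and each $d_i\geqslant 1$), and invokes Zaks' equality $n(1,d_i)=d_i+1$; after reindexing, $\prod_{i=1}^k(d_i+1)=\prod_{j=0}^{k-1}\bigl(\lfloor (d+j)/k\rfloor+1\bigr)$, which is the claimed bound.
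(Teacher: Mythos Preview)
Your proof is correct and follows essentially the same approach as the paper: take extremal $k_i$-neighborly families $F_i\subseteq S^{d_i}$, form their concatenation $F=F_1F_2\cdots F_s\subseteq S^d$, and observe that it is $(k_1+\cdots+k_s)$-neighborly. You have simply spelled out the additivity of $d(\cdot,\cdot)$ over blocks that the paper leaves as ``not hard to see,'' and your derivation of the Alon lower bound via $d_i=\lfloor(d+i-1)/k\rfloor$ matches the paper's subsequent remark as well.
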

		\begin{proof}
			For two families $F\subseteq S^d$ and $G\subseteq S^m$, let $FG=\{uv\colon u\in F,v\in G\}$ be the family of strings in $S^{d+m}$ consisting of all possible \emph{concatenations} of strings from $F$ and $G$. Clearly, we have $|FG|=|F|\cdot |G|$. In case of singleton families $\{v\}$ we will write simply $vF$ instead of $\{v\}F$, and also $v^m$ for a concatenation of $m$ copies of a string $v$. 
			
			Let $F_i$ denote a $k_i$-neighborly family of maximum size in $S^{d_i}$, for $i=1,\ldots, s$. It is not hard to see that $F=F_1F_2 \cdots F_s$ is a $k$-neighborly family in $S^d$, where $k=k_1+\cdots +k_s$ and $d=d_1+\cdots +d_s$, which proves the assertion.
		\end{proof}
		
		The lower bound for $n(k,d)$ in (\ref{Eq Alon}) may be derived now easily by applying the trivial inequality $n(1,d)\geqslant d+1$ (which actually is an equality by the Graham-Pollak Theorem). Indeed, put in the above result $s=k$, $k_i=1$, and $d=d_1+\cdots +d_k$. Then
		\begin{equation}
			n(k,d)\geqslant n(1,d_1)\cdots n(1,d_k)=(d_1+1)\cdots (d_k+1).
		\end{equation}
		The last product is maximized when $d$ is partitioned most evenly into summands $d_i$, which coincides with the product $\prod_{i=0}^{k-1}\left(\left\lfloor \frac{d+i}{k}\right\rfloor+1\right)$.
		
		\subsection{Proof of Theorem \ref{Theorem Lower Bound}}
		
		Let $k\leqslant m\leqslant d$ be fixed. Suppose that $d=a_1+\cdots +a_m+\binom{m}{k}-1$, where $a_i$ are positive integers. Let $\binom{[m]}{k}$ denote the set of all $k$-element subsets of $[m]$. For a subset $B\in \binom{[m]}{k}$, denote $p_B=\prod_{i\in B} (a_i+1)$. We will describe an explicit construction of a $k$-neighborly family of strings in $S^d$ with exactly $\sum_{B\in \binom{[m]}{k}}p_B$ members.
		
		Let $A_i\subseteq S^{a_i}$ be any $1$-neighborly family of size $a_i+1$, for $i=1,\ldots, m$. For a fixed subset $B\in \binom{[m]}{k}$, consider a set $R_B$ of strings in $S^{a_1+\cdots+a_m}$ defined by $R_B=X_1\cdots X_m$, where $X_i=A_i$ if $i\in B$ and $X_i=\{*^{a_i}\}$, otherwise. This construction can be described in words as follows. 
		Split the set $[a_1+\cdots +a_m]$ into consecutive intervals $I_1,\ldots, I_m$ of lengths $a_1,\ldots, a_m$, respectively. Next, for each $i\in B$, fill the interval $I_i$ with any string from $A_i$, while every other interval $I_j$, fill up with jokers. Clearly, we have $|R_B|=p_B$. Moreover, denoting $R=\bigcup_{B\in \binom{[m]}{k}}R_B$, we have $|R|=\sum_{B\in \binom{[m]}{k}}p_B$, since no two constructed strings are the same. 
		
		It is not hard to see that any two strings $u,v\in R_B$ satisfy $1\leqslant d(u,v)\leqslant k$. Also, it is not hard to check that for any two different subsets $B'$ and $B''$ in $\binom{[m]}{k}$, and any strings $u\in B'$ and $v\in B''$, we must have $d(u,v)\leqslant k-1$, though sometimes we may have $d(u,v)=0$. This last obstacle may be easily fixed as follows.
		
		Let $A$ be any $1$-neighborly family in $S^{\binom{m}{k}-1}$ of size $\binom{m}{k}$. So, the elements of $A$ may be indexed by subsets $B$ in $\binom{[m]}{k}$  as $v_B$. For each $B\in \binom{[m]}{k}$, we may now form a new family $R'_B=v_BR_B=\{v_Bx: x\in R_B\}$ by appending the string $v_B$ at the beginning of every string $x$ in $R_B$. Now, every pair of strings $u,v\in R'=\bigcup_{B\in \binom{[m]}{k}}R'_B$ satisfies $1\leqslant d(u,v)\leqslant k$, which means that $R'$ is $k$-neighborly. This completes construction of a $k$-neighborly family with the aforementioned size, as $|R'|=|R|$.
		
		It remains to demonstrate that choosing appropriate numbers $a_1,\ldots ,a_m$, summing up to $t=d-\binom{m}{k}+1$, gives the asserted lower bound. It is not hard to see that the product $p_B=\prod_{i\in B}(a_i+1)$ becomes maximal when its terms are equal as possible. So, we may assume that each $a_i$ satisfies $a_i\geqslant \lfloor t/m\rfloor$, which implies that $a_i+1> t/m$, for all $i=1,\ldots,m$. In consequence, the maximum value of $p_B$ is at least $(t/m)^k$. Hence, the maximal size of the family $R'$ is at least $\binom{m}{k}(t/m)^k$, which completes the proof of the first part of the theorem.
		
		The second part follows immediately by considering $m$ and $d$ tending to infinity so that $\binom{m}{k}/d$ tends to zero. \hfill $\Box$
		
		\section{Constructions and other results}
		
		\subsection{Exemplary constructions}
		
		Let us illustrate the construction from the proof of Theorem \ref{Theorem Lower Bound} by a concrete example. Our building blocks are $1$-neighborly families of strings of the following basic form: 
		\begin{equation}
			C_1=\{0,1\}, C_2=\{00,01,1*\}, C_3=\{000,001,01*,1**\},\ldots . 
		\end{equation}
		In general, $C_d$ arises from $C_{d-1}$ by appending $0$ at the beginning of every existing string and adding $1**\cdots*$ to the collection. Clearly, $C_d$ has size $d+1$ and is a maximal $1$-neighborly family in $S^d$.
		
		Let us compute the lower bound for $n(2,7)$. By applying Proposition \ref{Proposition Lower Bound} (or directly the lower bound from (\ref{Eq Alon})), we find that $n(2,7)\geqslant n(1,3)\cdot n(1,4)=4\cdot 5=20$. An explicit family of strings of that size is $C_3C_4$. 
		
		 However, we may get a better bound by the method from the proof of Theorem \ref{Theorem Lower Bound}. Indeed, taking $m=3$, we have $7-\binom{3}{2}+1=5$, which leads to $a_1=2$, $a_2=2$, and $a_3=1$. The corresponding families of strings may be written as
		\begin{equation}
			(00)(C_2C_2*), (01)(C_2**C_1), (1*)(**C_2C_1),
		\end{equation}
		while particular strings are obtained by substituting each symbol $C_i$ with an arbitrary string from $C_i$. Thus, the sizes of these families are, respectively, $3\cdot3=9$, $3\cdot2=6$, and $3\cdot 2=6$, which makes the total size of their union equal to $9+6+6=21>20$.
		
		Though this ``fragmented'' construction gives better results for $k=2$ than a more direct one form Proposition \ref{Proposition Lower Bound}, for bigger $k$ an opposite situation may happen. Consider, for instance, the case of $n(3,10)$. Using the former method, an optimal value for the lower bound is obtained by taking $m=4$ and $a_1=a_2=a_3=2$, $a_4=1$, which gives four families of strings
		\begin{equation}
			(000)(C_2C_2C_2*), (001)(C_2C_2**C_1), (01*)C_2**C_2C_1, (1**)(**C_2C_2C_1),
		\end{equation}
	whose union has total size $3\cdot3\cdot3+(3\cdot3\cdot2)\cdot3=27+54=81$. However, using Proposition \ref{Proposition Lower Bound} and the just computed lower bound $n(2,7)\geqslant 21$, we get $n(3,10)\geqslant n(2,7)\cdot n(1,3)\geqslant 21\cdot 4=84>81$. This leads to the following definition.
	
	Let $m(k,d)$ denote the maximum size of a $k$-neighborly family in $S^d$ that can be constructed as in the proof of Theorem \ref{Theorem Lower Bound}. More explicitly, $m(k,d)$ is the maximum value of the expression $\sum_{B\in \binom{[m]}{k}}\prod_{i\in B}(a_i+1)$ over all $m$ and $a_1,\ldots,a_m$ such that $\binom{m}{k}+m-1\leqslant d$ and $a_1+\cdots+a_m\leqslant d-\binom{m}{k}-1$. For instance, $m(3,10)=81$. Clearly, we have $m(k,d)\leqslant n(k,d)$.
	
	However, to take the advantage of construction from Proposition \ref{Proposition Lower Bound} that we have seen, we define $\overline{m}(k,d)=\max\{\prod_{i=1}^{s}m(k_i,d_i)\}$, where the maximum is taken over all pairs of partitions, $k=k_1+\cdots+k_s$ and $d=d_1+\cdots+d_s$, with $1\leqslant k_i\leqslant d_i$. For instance, we have $\overline{m}(3,10)=84$. Clearly, we always have $$m(k,d)\leqslant \overline{m}(k,d)\leqslant n(k,d).$$ We shall discuss some properties of these functions in the final section.

		\subsection{Partitions and laminations}
		In this subsection we present further observations on $k$-neighborly families of boxes with additional structural properties. Staying in string terminology, we shall somewhat rely on geometric intuitions.
		
		For a given family $F \subseteq S^d$, let us define the quantity $\vol(F)=\sum_{x\in F}2^{j(x)}$, where $j(x)$ is the number of jokers in $x$. If $F$ is a $k$-neighborly family, then $\vol(F)$ is just the total number of elements of $H^d$ contained in the family of subcubes $H(x)$, with $x\in F$. Clearly, we have then $\vol(F)\leqslant 2^d$. Also notice that $2^{j(x)}$ is a real geometric volume of the standard box in $\mathscr{S}^d$ corresponding to $x$ and $\vol(F)$ is the total sum of volumes of all standard boxes corresponding to members in the family $F$.
		
		We are interested below in those $k$-neighborly families $F$ having the maximum possible volume, namely $\vol(F)=2^d$. They are simply called \emph{partitions}. We start with the following simple lemma. 
		\begin{lemma}\label{Lemma Jokers}
			Let $F\subseteq S^d$ be a $k$-neighborly family of maximum size. Then each string in $F$ may have at most $d-k$ jokers.
		\end{lemma}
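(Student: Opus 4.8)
The plan is to argue by contradiction: if some string $x\in F$ had more than $d-k$ jokers, I would produce a strictly larger $k$-neighborly family, contradicting the maximality of $|F|$.

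The key observation is the following. Suppose $j(x)\geqslant d-k+1$. Then $x$ has at most $k-1$ non-joker positions, and since a position can contribute to $d(x,y)$ only when $x$ carries a bit there, we get $d(x,y)\leqslant k-1$ for \emph{every} $y\in F\setminus\{x\}$. This slack of one at $x$ is exactly what makes the surgery below safe.

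Now pick a joker position $i$ of $x$ and let $x^0$ (respectively $x^1$) be the string obtained from $x$ by changing its $i$-th coordinate from $\ast$ to $0$ (respectively to $1$). Set $F'=(F\setminus\{x\})\cup\{x^0,x^1\}$. I would then verify that $F'$ is $k$-neighborly by checking the relevant pairs. One has $d(x^0,x^1)=1\in[1,k]$ since $k\geqslant 1$; any pair lying inside $F\setminus\{x\}$ inherits the condition from $F$; and for $y\in F\setminus\{x\}$ a coordinate-by-coordinate comparison shows $d(x,y)\leqslant d(x^0,y)\leqslant d(x,y)+1$ (only coordinate $i$ changes, and its contribution goes from $0$ up to at most $1$), so that $1\leqslant d(x^0,y)\leqslant k$ follows from $d(x,y)\geqslant 1$ (as $F$ is $k$-neighborly) and $d(x,y)\leqslant k-1$ (the key observation); the same holds for $x^1$.

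Finally I would check $|F'|=|F|+1$. Since $x^0$ agrees with $x$ off position $i$, where $x^0_i=0$ and $x_i=\ast$, we have $d(x^0,x)=0$; as $F$ is $k$-neighborly this forces $x^0\notin F$, and likewise $x^1\notin F$, while clearly $x^0\neq x^1$. Hence $F'$ has exactly $|F|+1$ elements, the desired contradiction. The only subtle point in the whole argument is the key observation above; once it is in hand, the rest is a routine case check.
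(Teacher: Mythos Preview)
Your proof is correct and follows essentially the same approach as the paper's: assume a string $x$ has too many jokers, split it into $x^0,x^1$ by fixing one joker, and obtain a larger $k$-neighborly family. Your write-up is in fact more careful than the paper's, since you make explicit the key observation $d(x,y)\leqslant k-1$ (which the paper leaves implicit) and verify that $x^0,x^1\notin F$.
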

		\begin{proof}
			Suppose that there is a string $x\in F$ with $d-k+1$ jokers. Let $x^{(0)}$ and $x^{(1)}$ be two strings obtained from $x$ by changing one chosen joker into $0$ and $1$, respectively. Clearly, $d(x^{(0)},x^{(1)})=1$, and for every $y\in F$, $y\neq x$, we still have $1\leqslant d(x^{(i)},y)\leqslant k$, for both $i=0,1$. Hence, $F\setminus \{x\} \cup \{x^{(0)},x^{(1)}\}$ is a strictly larger $k$-neighborly family, contrary to our assumption.
		\end{proof}
		
		\begin{proposition}
			Every $(d-1)$-neighborly family $F\subseteq S^d$ of maximum size is a partition.
		\end{proposition}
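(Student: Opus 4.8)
The plan is to read the conclusion off the counting already carried out in the proof of Proposition~\ref{Proposition n(d-1,d)}, combined with Lemma~\ref{Lemma Jokers}. Let $F\subseteq S^d$ be a $(d-1)$-neighborly family of maximum size, so that $|F|=n(d-1,d)=3\cdot 2^{d-2}$. As in that proof, write $F=F_0\cup F^{\ast}$, where $F^{\ast}$ consists of the strings of $F$ carrying at least one joker and $F_0=F\setminus F^{\ast}$ of the joker-free ones.

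First I would invoke Lemma~\ref{Lemma Jokers} with $k=d-1$: every string of $F$ has at most $d-k=1$ joker. Hence each string in $F^{\ast}$ has exactly one joker and therefore contributes exactly $2$ to $\vol(F)$, while each string in $F_0$ contributes $1$; consequently
\[
\vol(F)=|F_0|+2\,|F^{\ast}|.
\]

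Next I would reuse the two estimates from the proof of Proposition~\ref{Proposition n(d-1,d)}. The subcubes $H(x)$ with $x\in F^{\ast}$ are pairwise disjoint and contain no point of $F_0$, so $|F^{\ast}|\leqslant\tfrac12\bigl(2^d-|F_0|\bigr)$; and $F_0$ contains no complementary pair of binary strings, so $|F_0|\leqslant 2^{d-1}$. Chaining these,
\[
3\cdot 2^{d-2}=|F|=|F_0|+|F^{\ast}|\leqslant 2^{d-1}+\tfrac12|F_0|\leqslant 2^{d-1}+2^{d-2}=3\cdot 2^{d-2},
\]
so every inequality here is in fact an equality; in particular $|F^{\ast}|=\tfrac12(2^d-|F_0|)$, i.e.\ $|F_0|+2|F^{\ast}|=2^d$. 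Together with the displayed formula for $\vol(F)$ this yields $\vol(F)=2^d$, which means $F$ is a partition. I do not expect a genuine obstacle here; the only point needing care is that Lemma~\ref{Lemma Jokers} is exactly what forces every joker-bearing string to have \emph{exactly} one joker, making $\vol(F)=|F_0|+2|F^{\ast}|$ an exact identity rather than an inequality, and this is what lets the tightness of the cardinality bound transfer directly to the volume.
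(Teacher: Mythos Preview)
Your proof is correct and follows essentially the same route as the paper: invoke Lemma~\ref{Lemma Jokers} to force every string in $F^{\ast}$ to have exactly one joker, then use the equality case in the chain of inequalities from the proof of Proposition~\ref{Proposition n(d-1,d)} to obtain $|F_0|+2|F^{\ast}|=2^d$, whence $\vol(F)=2^d$. If anything, you spell out the equality-tracking more explicitly than the paper does.
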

		\begin{proof}
			Let $F_0$ and $F^*$ be as in the proof of Proposition \ref{Proposition n(d-1,d)}. From this proof we know that $|F_0|\leqslant 2^{d-1}$ and $|F_0|+|F^*|=2^{d-1}+2^{d-2}$. By Lemma \ref{Lemma Jokers} we know that each string in $F^*$ has exactly one joker. It follows that $|F_0|=2^{d-1}$ and $|F^*|=2^{d-2}$, and consequently, $\vol(F)=2^d$.
		\end{proof}
		
		Our next results will have consequences for the opposite side of the scene, namely, for at most $2$-neighborly families $F$. In order to state it we need to introduce some notation and terminology.
		
		For a given family of strings $F\subseteq S^d$, denote by $F^{i,s}$ a subfamily having symbol $s\in S$ on the $i$-th position. A family $F\subseteq S^d$ is called a \emph{lamination} if it is a partition ($\vol(F)=2^d$) and $F=F^{i,0}\cup F^{i,1}$, for some $i\in [d]$. For $x\in S^d$, let ${\rm prop}(x)=\{i\in [d]\colon x_i\in \{0,1\}\}$ be the set of non-joker positions in $x$. Also, let $\sgn (x)=(-1)^{|\{i\in[d]\colon x_i=1\}|}$.
		
		The following lemma is a special case of a more general result given in \cite{KP} (Corollary 6.2).
		
		\begin{lemma}
			\label{Lemat sgn}
			Let $F\subseteq S^d$ be a partition and let $v\in F$ be a string with the least number of jokers. Then, denoting $P_v=\{x \in F\colon {\rm prop}(x)={\rm prop}(v)\}$, we have
			\begin{equation}
				\label{eq1}
				\sum_{x\in P_v}{\rm sgn}(x)=0.
			\end{equation}
		\end{lemma}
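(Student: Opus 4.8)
The plan is to use the minimal-joker property of $v$ to restrict the partition $F$ to a well-chosen affine slice of the cube $H^d$, on which the members of $P_v$ become isolated points while every other member of $F$ contributes a subcube of positive dimension; a standard sign-sum cancellation on that slice then yields (\ref{eq1}).

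In detail, I would first set $r=j(v)$ and $I={\rm prop}(v)$, so that $|I|=d-r$. Since $v$ has the fewest jokers in $F$, every $x\in F$ satisfies $j(x)\geqslant r$, hence $|{\rm prop}(x)|\leqslant|I|$; moreover, if $|F|\geqslant2$ then disjointness of the subcubes $H(x)$ forces $r\leqslant d-1$, so $|I|\geqslant1$ (and for $|F|=1$, i.e.\ $F=\{\ast^d\}$, there is nothing to do). Now fix an arbitrary $c\in\{0,1\}^{[d]\setminus I}$ and consider the subcube $W=\{w\in H^d\colon w_i=c_i\text{ for all }i\notin I\}$, which I identify with $\{0,1\}^I$ via $w\mapsto w|_I$. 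Since $F$ partitions $H^d$, the nonempty sets $H(x)\cap W$, $x\in F$, partition $W$.

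The key step, where the hypothesis on $v$ is essential, is to classify these pieces. If $x\in P_v$, then ${\rm prop}(x)=I$ and $H(x)\cap W$ is the single point of $W$ whose $I$-coordinates agree with $x$; distinct members of $P_v$ give distinct such points, so these form a set $T\subseteq\{0,1\}^I$ with $|T|=|P_v|$. If instead $x\in F\setminus P_v$ and $H(x)\cap W\neq\emptyset$, then, regarded inside $\{0,1\}^I$, the piece $H(x)\cap W$ is the subcube obtained by fixing the coordinates in $I\cap{\rm prop}(x)$; its dimension equals $|I|-|I\cap{\rm prop}(x)|$, and this is at least $1$, since $|I\cap{\rm prop}(x)|=|I|$ would give $I\subseteq{\rm prop}(x)$, which, combined with $|{\rm prop}(x)|\leqslant|I|$, forces ${\rm prop}(x)=I$, i.e.\ $x\in P_v$ --- a contradiction. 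Hence $\{0,1\}^I$ splits into the isolated points forming $T$ together with subcubes each of dimension at least $1$.

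Finally I would carry out the sign count. Writing $\varepsilon(a)=(-1)^{|\{i\in I\colon a_i=1\}|}$ for $a\in\{0,1\}^I$, we have $\sum_{a\in\{0,1\}^I}\varepsilon(a)=0$ because $|I|\geqslant1$, and $\sum_{a\in Q}\varepsilon(a)=0$ for every subcube $Q\subseteq\{0,1\}^I$ of dimension at least $1$ (pair up the points of $Q$ along a free coordinate). Summing $\varepsilon$ over the partition of $\{0,1\}^I$ produced above therefore leaves exactly $\sum_{a\in T}\varepsilon(a)$, so this quantity is $0$. Since jokers contribute no $1$'s, $\varepsilon$ of the point of $T$ coming from $x\in P_v$ equals ${\rm sgn}(x)$, and hence $\sum_{x\in P_v}{\rm sgn}(x)=\sum_{a\in T}\varepsilon(a)=0$, which is (\ref{eq1}). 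I expect the only delicate point to be the dimension bound in the classification step --- namely, that no member of $F\setminus P_v$ degenerates to an isolated point on $W$ --- and this is exactly what the minimal-joker property of $v$ guarantees.
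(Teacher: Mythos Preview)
Your argument is correct and is essentially the paper's own proof packaged in a single step: the paper first handles the case ${\rm prop}(v)=[d]$ and then reduces the general case to it by slicing at $c=0$ on the joker coordinates of $v$, whereas you slice directly (at an arbitrary $c$) and carry out the sign count on the slice, using the minimality of $j(v)$ in exactly the same way to guarantee that the non-$P_v$ pieces have positive dimension. One small quibble: for $F=\{\ast^d\}$ the identity (\ref{eq1}) is actually false (the sum equals $1$), so ``nothing to do'' is not quite right --- but the paper's proof has the same blind spot, and the lemma is only ever applied with $|F|\geqslant 2$.
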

		\begin{proof}
		If ${\rm prop}(v)=[d]$, then the set of strings $F\setminus P_v$ can be partitioned into edges of $\{0,1\}^d$ (strings with one joker). Thus, a half of strings in $P_v$ contains an odd number of $1$'s, and the other half an even number of $1$'s. 
		
		Let now $|{\rm prop}(v)|=k<d$. We may assume that $v=v_1\cdots v_k*\cdots*$, where $v_i\in \{0,1\}$ for $i\in [k]$. Note that the set of strings $$G=\{w\in F: w_i\in \{0,*\}\;\; {\rm for}\;\; i\geqslant k+1\}$$ is a partition in dimension $k$. Moreover, the set $P'_v=\{w_1\cdots w_k\colon w\in P_v\}$ consists of all strings in $G$ which do not contain jokers. Thus, by the first part of the proof, the equality (\ref{eq1}) holds.
	\end{proof}
		
		A pair of strings $x,y\in S^d$  is a \emph{twin} pair if $x_i+y_i=1$ for precisely one $i\in [d]$ and $x_j=y_j$ for all $j\in [d]\setminus \{i\}$. A \emph{union} $x\cup y$ of a twin pair $x,y$ is the string $z\in S^d$ such that $z_i=*$ if $x_i+y_i=1$ and $z_j=x_j$ for all $j\in [d]\setminus \{i\}$. In our geometric interpretation, a box corresponding to $z$ is the union of boxes corresponding to $x$ and $y$.
		
		Using the above lemma we prove that among $k$-neighborly families, with $k\leqslant 2$, every partition is a lamination.
		\begin{proposition}
			\label{Proposition Lamination}
			Let $F\subseteq S^d$ be a $k$-neighborly family, with $k\leqslant 2$. If $F$ is a partition, then $F$ is a lamination.
		\end{proposition}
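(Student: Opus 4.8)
The plan is to induct on $|F|$, using a merging step that deletes one box at a time while staying inside the class of $k$-neighborly partitions ($k\leqslant 2$). Two ingredients are needed: (i) every $k$-neighborly partition with at least two members contains a twin pair; and (ii) if $x,x'$ is a twin pair in such a partition $F$, then $F$ is a lamination provided the smaller partition $F'=(F\setminus\{x,x'\})\cup\{x\cup x'\}$ is. Throughout I assume $|F|\geqslant 2$ (the trivial partition $\{\ast^d\}$, the only one of size one, aside).

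For (i) I would apply Lemma \ref{Lemat sgn}. Choose $v\in F$ with the fewest jokers; its non-joker set $P={\rm prop}(v)$ is nonempty, for otherwise $v=\ast^d$ and then $\vol(\{v\})=2^d=\vol(F)$ would force $|F|=1$. Lemma \ref{Lemat sgn} gives $\sum_{x\in P_v}\sgn(x)=0$, where $P_v=\{x\in F\colon {\rm prop}(x)=P\}$; since $\sgn$ takes only the values $\pm1$ and $v\in P_v$, there exist $x,x'\in P_v$ with $\sgn(x)=-\sgn(x')$. Now $x$ and $x'$ have the \emph{same} joker positions, so they differ only within $P$, and the number of coordinates on which they differ is exactly $d(x,x')$; opposite signs force this number to be odd, hence, being at most $k\leqslant 2$, equal to $1$. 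Thus $x$ and $x'$ differ in a single coordinate $\ell$, where one is $0$ and the other is $1$, and agree everywhere else: they form a twin pair.

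For (ii), put $z=x\cup x'$ and $F'=(F\setminus\{x,x'\})\cup\{z\}$. Since $H(z)=H(x)\cup H(x')$ we get $\vol(F')=\vol(F)=2^d$, so $F'$ is a partition. For any $w\in F$ other than $x,x'$ we have $d(z,w)\leqslant d(x,w)\leqslant 2$ (replacing $x$ by $z$ only turns a non-joker into a joker, which cannot increase $d$), while $d(z,w)\geqslant 1$, because $d(z,w)=0$ would give $H(z)\cap H(w)=\big(H(x)\cup H(x')\big)\cap H(w)\neq\emptyset$, contradicting the disjointness of the subcubes $H(u)$, $u\in F$. So $F'$ is a $k$-neighborly partition with $|F'|=|F|-1$. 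If $|F'|=1$ we are in the base case treated below; otherwise, by the induction hypothesis $F'$ is a lamination, say no string of $F'$ has a joker in coordinate $i$. In particular $z$ has no joker at $i$, so $i\neq\ell$ and $x_i=z_i=x'_i\in\{0,1\}$; and every string of $F$ different from $x,x'$ belongs to $F'$, hence also has a non-joker at $i$. Therefore $F=F^{i,0}\cup F^{i,1}$ is a lamination. The base case $|F|=2$ is direct: writing $F=\{x,y\}$, the identity $2^{j(x)}+2^{j(y)}=2^d$ forces $j(x)=j(y)=d-1$, so $x$ and $y$ each have a unique non-joker coordinate; since $d(x,y)\geqslant 1$ these coordinates coincide, say at $i$, and carry opposite values, whence $F=F^{i,0}\cup F^{i,1}$.

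The crux is ingredient (i) — seeing that a twin pair has to be present. The mechanism is that $\sgn$ is $\pm1$-valued and the members of $P_v$ all share the prop set $P$, so the sign-balance supplied by Lemma \ref{Lemat sgn} forces two of them to lie at \emph{odd} distance; the hypothesis $k\leqslant 2$ then collapses that distance to exactly $1$. Once this is available, the merging reduction and the induction are routine, the only mild subtlety being the appeal to the partition property to certify $d(z,w)\geqslant 1$ after merging.
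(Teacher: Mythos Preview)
Your proof is correct and follows essentially the same approach as the paper's: find a twin pair via Lemma~\ref{Lemat sgn} (using sign parity together with $k\leqslant 2$ to force the odd distance to equal~$1$), merge it to get a smaller $k$-neighborly partition, and pass the lamination coordinate back from the reduced family to $F$. You have simply spelled out the details the paper leaves implicit---in particular the parity argument for the existence of the twin pair, the check that $d(z,w)\geqslant 1$ via disjointness of the subcubes, and the back-propagation of the splitting coordinate~$i$.
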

		
		\begin{proof}
		Let $x\in F$ be a string with the least number of jokers. Since $k\leqslant 2$, by Lemma \ref{Lemat sgn} there is $y\in F \setminus \{x\}$ such that $x,y$ is a twin pair. Let $G=(F\setminus \{x,y\})\cup \{x\cup y\}$. Obviously, the partition $G$ is still at most 2-neighborly and $|G|=|F|-1$. Continuing this process we obtain the partition $P=\{*\cdots*0*\cdots*,*\cdots*1*\cdots*\}$, where $0,1$ stand at some $i$th position. Then $F=F^{i,0}\cup F^{i,1}$.
		\end{proof}
		
		Let $v\in S^d$ be a string of length $d$. For $i \in [d]$, let $v_{-i}=v_1\cdots v_{i-1}v_{i+1}\cdots v_d$ denote the result of deletion of the $i$-th letter form $v$. More generally, for $F\subseteq S^d$ we write $F_{-i}=\{v_{-i}: v\in F \}$. A \emph{total lamination} is defined recursively as follows. For every $d\geqslant 1$, the $1$-element family consisting of a string with only jokers, as well as the full Hamming cube $H^d$, are total laminations. Next suppose that total laminations of dimensions up to $d-1$ have been already defined. Then a lamination $F\subseteq S^d$ is \emph{total} if there is $i\in[d]$ such that $F=F^{i,0}\cup F^{i,1}$ and both families, $F^{i,0}_{-i}$ and $F^{i,1}_{-i}$ are total laminations, too (see Fig. \ref{Figure Total Lamination}).
		
		It is clear that the proof of Proposition \ref{Proposition Lamination} gives actually the following stonger statement.
		\begin{corollary}
			Every at most $2$-neighborly partition in $S^d$ is a total lamination. 
		\end{corollary}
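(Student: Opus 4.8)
The plan is to prove the corollary by induction on the dimension $d$, bootstrapping off the merging argument already used for Proposition~\ref{Proposition Lamination} and then discarding the coordinate along which the lamination splits. The base case is trivial: for $d=1$ the only partitions in $S^1$ are $\{*\}$ and $\{0,1\}=H^1$, and both are total laminations by the recursive definition. For every $d$, the two extreme partitions $\{*^d\}$ and $H^d$ are total laminations by fiat, so in the inductive step I may assume $1<|F|<2^d$.

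For the inductive step I would run the proof of Proposition~\ref{Proposition Lamination} essentially verbatim on the at most $2$-neighborly partition $F\subseteq S^d$: starting from a string $x\in F$ with the fewest jokers, Lemma~\ref{Lemat sgn} together with $k\leqslant 2$ --- which forces the two opposite-sign strings it produces in $P_x$ to lie at distance exactly $1$, i.e.\ to form a twin pair --- lets me replace that twin pair by its union, keeping $F$ an at most $2$-neighborly partition while lowering $|F|$ by one. Iterating reaches the two-string partition split at some coordinate $i\in[d]$; since every box created along the way is a union of boxes of $F$ and the two terminal boxes are the halves $\{x_i=0\}$ and $\{x_i=1\}$, each box of $F$ lies in one of these halves, so $F=F^{i,0}\cup F^{i,1}$ and $F$ is a lamination.

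It then remains to feed the two halves into the induction. Deleting the (joker-free) coordinate $i$ is a bijection $F^{i,0}\to F^{i,0}_{-i}$ that preserves joker counts, and $F^{i,0}$ tiles the slab $\{x_i=0\}$, so $\vol(F^{i,0}_{-i})=2^{d-1}$ and $F^{i,0}_{-i}$ is a partition in $S^{d-1}$; moreover, if $u,v\in F^{i,0}_{-i}$ come from distinct $x,y\in F^{i,0}$ then $x_i=y_i$, whence $d(u,v)=d(x,y)\in\{1,2\}$, so $F^{i,0}_{-i}$ is at most $2$-neighborly (and likewise $F^{i,1}_{-i}$). By the inductive hypothesis both $F^{i,0}_{-i}$ and $F^{i,1}_{-i}$ are total laminations, hence $F$ is total by the recursive definition, closing the induction. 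The argument's substance is already in Proposition~\ref{Proposition Lamination}, so the only delicate points are bookkeeping: verifying both inheritance claims for the halving operation (staying a partition, staying at most $2$-neighborly one dimension down), and separately disposing of the degenerate family $\{*^d\}$, on which the merging step never starts but which is a total lamination by definition anyway.
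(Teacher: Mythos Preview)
Your proof is correct and follows essentially the same approach as the paper, which simply remarks that the merging argument in the proof of Proposition~\ref{Proposition Lamination} already yields the stronger statement. You have made explicit the induction on $d$ and the bookkeeping for the halving operation that the paper leaves to the reader.
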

		
		A model example of a total lamination is a canonical $1$-neighborly family $C_d$. Indeed, the family $C_d$ splits along the first coordinate into $C_{d-1}$ and $\{**\cdots *\}$, which are both total laminations. For example, $C_3=\{000,001,01*,1**\}$ splits into $C_2=\{00,01,1\ast\}$ and $\{**\}$. This implies that all neighborly families obtained by the method based on Proposition \ref{Proposition Lower Bound} and Theorem \ref{Theorem Lower Bound} are total laminations, too.
		
		\begin{figure}[ht]
			
			\begin{center}
				
				\resizebox{8cm}{!}{
					
					\includegraphics{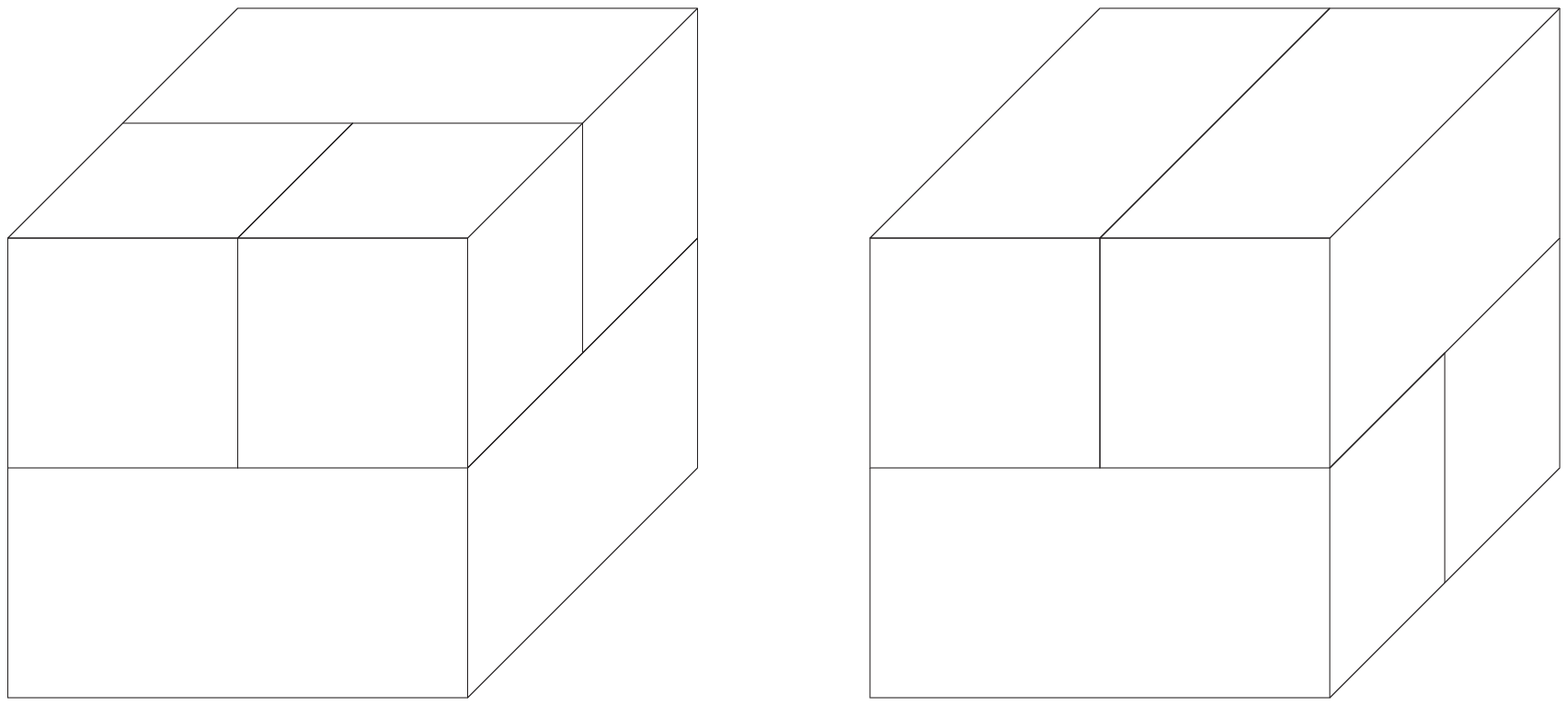}
					
				}
				
				\caption{Total laminations in dimension 3, $F=\{001,101,\ast11,\ast\ast0\}$ and $G=\{0\ast1,1\ast1,\ast00,\ast10\}$, with $|F|=|G|=n(1,3)$.}
				\label{Figure Total Lamination}
			\end{center}
			
		\end{figure}
		
		\section{Final comments}
		Let us conclude the paper with some open problems and suggestions for future studies of the function $n(k,d)$.
		
		\begin{conjecture}\label{Conjecture Total Lamination}
			For every $1\leqslant k\leqslant d$, there exists a $k$-neighborly total lamination $F$ in $S^d$ of size $n(k,d)$.
		\end{conjecture}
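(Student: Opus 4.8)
The natural attack is strong induction on $d$. The cases $k\in\{1,d-1,d\}$ are already available: $H^d$ is a total lamination of size $n(d,d)=2^d$, the canonical family $C_d$ is a total lamination of size $n(1,d)=d+1$, and the explicit family $F_0\cup F^{*}$ built in the proof of Proposition \ref{Proposition n(d-1,d)} is a total lamination of size $n(d-1,d)=3\cdot 2^{d-2}$. Since a total lamination is, by its recursive definition, a partition of $H^d$ — each half $F^{i,0}_{-i}$, $F^{i,1}_{-i}$ partitions $H^{d-1}$ — the problem naturally breaks into two parts: (I) prove that $n(k,d)$ is attained by some $k$-neighborly \emph{partition} $F\subseteq S^d$; and (II) prove that any maximum $k$-neighborly partition can be transformed, by moves preserving cardinality, the volume $\vol(F)=2^d$, and $k$-neighborliness, into one that splits as $F=F^{i,0}\cup F^{i,1}$ along some coordinate $i$, with $F^{i,0}_{-i}$ and $F^{i,1}_{-i}$ total laminations. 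For $k\leqslant 2$ part (II) is free, since by the Corollary following Proposition \ref{Proposition Lamination} \emph{every} at most $2$-neighborly partition is already a total lamination, so only (I) remains; in the range $3\leqslant k\leqslant d-2$ both parts need new arguments.

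For part (II) I would imitate and strengthen the proof of Proposition \ref{Proposition Lamination}. Given a maximum $k$-neighborly partition $F$, take $v\in F$ with the fewest jokers (at most $d-k$ of them, by Lemma \ref{Lemma Jokers}); Lemma \ref{Lemat sgn} gives $\sum_{x\in P_v}\sgn(x)=0$. When $k\leqslant 2$ this forces a twin pair inside $P_v$, because the strings of $P_v$ are then pairwise at distance at most $2$ in the metric $d$, so two of opposite sign are at an odd, hence unit, distance; merging such pairs — an operation that preserves $k$-neighborliness of a partition, since the new subcube stays disjoint from the others (so $d\geqslant 1$ is kept) while $d(x\cup y,z)\leqslant\max\{d(x,z),d(y,z)\}\leqslant k$ — and tracking a coordinate that is never merged over yields $F=F^{i,0}\cup F^{i,1}$. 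For $k\geqslant 3$ the sign identity alone no longer produces a twin pair (e.g.\ $P_v$ might consist of two strings at distance $3$), so one needs a finer invariant, or a richer repertoire of volume-preserving moves: merge a twin pair and re-split the union along a fresh coordinate, or trade a subcube for an equal-volume union of smaller ones. One must then show that the graph on maximum $k$-neighborly partitions whose edges are these moves is connected enough that every vertex lies in the component of some laminated partition, and finally recurse on the two halves — here the induction hypothesis must be the full statement ``every maximum $k$-neighborly partition morphs to a total lamination'', because the halves need not be of maximum size in $S^{d-1}$.

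The main obstacle, though, is part (I), and one should not dismiss the possibility that the conjecture fails at this point. When $k$ is close to $d$, the volume inequality behind Proposition \ref{Proposition n(d-1,d)}, namely $|F|\leqslant 2^{d-1}+\tfrac12|F_0|$ with $|F_0|\leqslant\kappa(k,d)$, can be tight only if the joker-bearing subcubes of $F$ partition $H^d\setminus F_0$; driving Kleitman's theorem together with Bezrukov's description of the extremal diameter-$k$ sets through this argument should establish (I), and hence the whole conjecture via (II), for a range such as $k=d-O(1)$. For $k$ far below $d$ there is no such leverage — a largest $k$-neighborly family has volume vastly below $2^d$, and it is not clear that $H^d$ admits \emph{any} maximum-size $k$-neighborly partition — so one would need a ``filling'' procedure, repeatedly exchanging a covered subcube that is far (in the metric $d$) from some box for an uncovered vertex together with a local rearrangement, all the while keeping every pairwise distance in $[1,k]$, or else a counterexample. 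Given this delicacy, the prudent first step is a computer search for small $k,d$ in the spirit of the experiments behind Table \ref{table:Gurobi}: check whether each $n(k,d)$ is realized by a total lamination and, if so, extract the local moves that occur, to guide the proofs of (I) and (II).
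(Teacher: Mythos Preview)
This statement is posed in the paper as an open \emph{conjecture}; the paper offers no proof of it, so there is no argument to compare your proposal against. The authors record only that it holds in the cases $k\in\{1,d-1,d\}$ you mention and in the small cases where $n(k,d)$ has been determined exactly --- with one explicit exception: they note that $n(6,8)=150$, but that \emph{the only construction of this size known to them is not a total lamination}. In other words, the computer search you recommend as a ``prudent first step'' has already been carried out in part, and at $(k,d)=(6,8)$ it produced a warning sign rather than supporting evidence. This is precisely the kind of obstruction you anticipate in your discussion of part~(I), and it shows that the conjecture may simply be false.

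Your write-up is a candid research programme rather than a proof, and you flag the gaps yourself, so let me just sharpen them. Part~(I) is the genuine crux, and nothing in the outline forces a maximum family to be (or to be reshapeable into) a partition; the $(6,8)$ example makes this concrete. In part~(II), your inductive scheme has a subtle mismatch: after splitting a maximum $k$-neighborly partition as $F=F^{i,0}\cup F^{i,1}$, the halves $F^{i,0}_{-i}$ and $F^{i,1}_{-i}$ are at most $k$-neighborly partitions of $H^{d-1}$, but they need not be \emph{maximum} among such objects, so your proposed induction hypothesis (``every maximum $k$-neighborly partition morphs to a total lamination'') does not apply to them; you would need the stronger statement that every $(\leqslant k)$-neighborly partition morphs to a total lamination of the same size, and for $k\geqslant 3$ that is exactly where the twin-pair argument from Lemma~\ref{Lemat sgn} breaks down, as you note. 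In short: the plan is reasonable as exploration, but the statement remains open, and the $(6,8)$ case is the first thing to resolve.
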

		
	As mentioned above, the conjecture is trivially true for $k=1$ and $k=d$. By the proof of Proposition \ref{Proposition n(d-1,d)} it is also true for $k=d-1$. The statement holds also in all cases for which the exact value of $n(k,d)$ is known, except for $n(6,8)=150$, where the only construction known to us is not a total lamination. In Table \ref{table:l(2,d)} we collected initial lower bounds for $l(2,d)$ obtained by computer experiments that improve upon the values of $m(2,d)$ (defined in Subsection 3.1), for some initial dimensions $d$.
	
		\begin{table}[h!]
		\centering
		\begin{tabular}{|c||l|l|l|l|l|l|l|l|l|l|l|l|l|l|l|l|}
			\hline
			$d$ &3&4&5& 6&7& 8& 9   & 10 & 11 & 12 & 13 & 14& 15 & 16 & 17 & 18\\
			\hline
			$m(2,d)=$&6&9&12& 16&21& 27& 33 & 40 & 48 & 56 & 65 &75& 85 & 96& 108 & 120\\
			\hline
			$l(2,d)\geqslant$&6&9&12& 16&21& 27& 33 & 40 & 48 & 57 & 67 &78& 90 & 102 & 115 & 129\\
			\hline
		\end{tabular}
		\caption{The lower bounds for $l(2,d)\leqslant n(2,d)$ obtained by constructions of total laminations compared to the function $m(k,d)\leqslant n(2,d)$ arising from Theorem \ref{Theorem Lower Bound}.}
		\label{table:l(2,d)}
	\end{table}
	
	Total laminations have many nice structural properties. For instance, one may produce out of any $k$-neighborly total lamination $F$ of dimension $d$ a sequence of similar families with all possible sizes from $|F|$ down to $1$. For example, to make a smaller family from $C_3$, take a twin pair $(000,001)$ and substitute it by their union $000\cup001=00*$. This gives a new family $\{00*,01*,1**\}$, which is still a $1$-neighborly total lamination. Repeating this step for the last family with a twin pair $(00*,01*)$ gives $\{0**,1**\}$, which again is a $1$-neighborly total lamination. In the same way we get $\{***\}$ in the final step. Thus, if Conjecture \ref{Conjecture Total Lamination} is true, then for every $(k,d)$ and every $1\leqslant p\leqslant n(k,d)$ there is a $k$-neighborly total lamination of size $p$.
		
		Though a general behavior of the function $n(k,d)$ is quite mysterious, even for fixed $k$, we propose the following conjecture stating that the numbers $n(k,d)$ satisfy a kind of ``Pascal Triangle'' property.
		
		\begin{conjecture}\label{Conjecture Pascal Triangle}
			For every $2\leqslant k\leqslant d$,
			\begin{equation}
					n(k,d)\leqslant n(k-1,d-1)+n(k,d-1).
			\end{equation}
		\end{conjecture}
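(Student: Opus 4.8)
I would pass to an extremal family and split it along a coordinate \emph{together with a choice of one of the two bits}. Let $F\subseteq S^d$ be a $k$-neighborly family of maximum size. The whole argument reduces to the following assertion $(\star)$: there exist $i\in[d]$ and $b\in\{0,1\}$ such that no two strings $x,y\in F$ with $x_i=y_i=b$ satisfy $d(x,y)=k$; equivalently, for some coordinate $i$ one of the two bits $0,1$ never occurs as the common $i$-th symbol of a pair of strings lying at the maximal distance $k$.

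Granting $(\star)$, fix such $i$ and $b$ and decompose $F=F^{i,b}\,\sqcup\,(F^{i,1-b}\cup F^{i,*})$; on each part consider the deletion map $z\mapsto z_{-i}$. Since $d(z,z')\geqslant 1$ for distinct $z,z'\in F$, any two distinct strings conflict, as a $0$/$1$ pair, on at least one coordinate. For two distinct strings of $F^{i,b}$ this coordinate is necessarily different from $i$, so $z\mapsto z_{-i}$ is injective on $F^{i,b}$; moreover $d(x_{-i},x'_{-i})=d(x,x')$ for $x,x'\in F^{i,b}$, and this value is $\leqslant k$ since $F$ is $k$-neighborly and is $\ne k$ by $(\star)$, so $F^{i,b}_{-i}$ is $(k-1)$-neighborly and $|F^{i,b}|\leqslant n(k-1,d-1)$. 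On $F^{i,1-b}\cup F^{i,*}$ the $i$-th symbol always belongs to $\{1-b,\ast\}$, hence coordinate $i$ never produces a conflict; again $z\mapsto z_{-i}$ is injective and $d(z_{-i},z'_{-i})=d(z,z')\in[1,k]$ for distinct $z,z'$, so the image is $k$-neighborly and $|F^{i,1-b}\cup F^{i,*}|\leqslant n(k,d-1)$. Adding the two bounds, $n(k,d)=|F^{i,b}|+|F^{i,1-b}\cup F^{i,*}|\leqslant n(k-1,d-1)+n(k,d-1)$.

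The crux — and, I expect, the real content of the conjecture — is to prove $(\star)$ for an extremal $F$. Extremality is essential here: $(\star)$ can fail for non-maximal $k$-neighborly families (for instance the set of even-weight strings in $H^3$ fails it for $k=2$), so one cannot hope to prove it by a purely local argument. On the positive side $(\star)$ holds in all extremal families we can write down: in a Hamming ball $B_r$ (with $k=2r$ or $2r+1$) no two strings ever share a $1$ in a common coordinate, so $(\star)$ holds with $b=1$, and the same applies to Bezrukov's union of two balls, to the canonical laminations $C_d$, and to all families assembled from them via Proposition \ref{Proposition Lower Bound} and Theorem \ref{Theorem Lower Bound}; one also checks it directly in the extremal family for $n(d-1,d)$ from Proposition \ref{Proposition n(d-1,d)}. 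For a general extremal $F$ I would argue by contradiction: if every pair $(i,b)$ were ``witnessed'' by two strings of $F$ at distance $k$ sharing the symbol $b$ in coordinate $i$, then these $2d$ witnessing pairs would force enough structure on $F$ to let one enlarge it — e.g.\ by ``opening'' a distance-$k$ pair at one of its $k$ conflict coordinates, or by adjoining a new string — contradicting $|F|=n(k,d)$; a preliminary normalisation of $F$ (reducing jokers as in Lemma \ref{Lemma Jokers}, and perhaps applying coordinate automorphisms) is likely needed first. A natural intermediate step is to verify $(\star)$ under Conjecture \ref{Conjecture Total Lamination}, where $F=F^{j,0}\cup F^{j,1}$ with $F^{j,0}_{-j}$ and $F^{j,1}_{-j}$ partitions of $H^{d-1}$ that are pairwise at distance at most $k-1$ — a substantially more controlled situation.
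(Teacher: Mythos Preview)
The statement you are attacking is presented in the paper as an open \emph{conjecture}; the authors give no proof, only computational evidence (Table~\ref{table:Gurobi}) and the remark that the analogous inequality holds for the auxiliary function $\overline{m}(k,d)$. So there is no proof in the paper to compare against.

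Your reduction to assertion $(\star)$ is clean and correct. The splitting $F=F^{i,b}\sqcup(F^{i,1-b}\cup F^{i,*})$ and the verification that both parts project injectively under $z\mapsto z_{-i}$ to, respectively, a $(k-1)$-neighborly and a $k$-neighborly family in $S^{d-1}$ are sound (in particular, injectivity on the second part holds because two distinct strings with $i$-th symbols in $\{1-b,*\}$ and agreeing off coordinate $i$ would have $d=0$, which is forbidden). So $(\star)$ for \emph{some} extremal $F$ does imply the conjecture.

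But you are equally candid that $(\star)$ is the entire difficulty, and you do not prove it. The contradiction sketch in your last paragraph --- that the $2d$ witnessing pairs ``would force enough structure on $F$ to let one enlarge it'' --- is not an argument: you do not specify what structure is forced, nor why it permits enlarging an already maximum family. The fallback of verifying $(\star)$ under Conjecture~\ref{Conjecture Total Lamination} is reasonable as a partial result, but even there you would need to handle total laminations not built from the canonical blocks $C_d$, and you give no mechanism for that either. Note also that it would suffice to find \emph{one} extremal $F$ satisfying $(\star)$; you need not prove it for every extremal family.

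In summary, your proposal is a valid \emph{reduction} of one open problem to another, arguably sharper, open assertion about extremal families --- but it is not a proof, and the gap you identify is precisely the content of the conjecture.
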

	The conjecture is supported by the results of some computational experiments performed on Gurobi solver, which are included in Table \ref{table:Gurobi}. One may also consult a table with bounds for $n(k,d)$ in a recent paper \cite{ChWX}. A more theoretical argument in favor of the conjecture stems from the following proposition (whose simple proof is left to the reader).
	
	\begin{proposition}
	For every $2\leqslant k\leqslant d$,
	$$
	\overline{m}(k,d)\leqslant \overline{m}(k-1,d-1)+\overline{m}(k,d-1).
	$$
	\end{proposition}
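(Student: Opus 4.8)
The plan is to prove a one‑block inequality for $m$ and then lift it to $\overline m$ through the product structure in the definition of $\overline m$. Writing $w_i=a_i+1\ge 2$, the quantity $m(k,d)$ is the maximum of the $k$‑th elementary symmetric polynomial $e_k(w_1,\dots,w_m)$ over all $m\ge k$ and integers $w_i\ge 2$ with $\sum_i w_i\le d+m+1-\binom mk$; since $e_k$ is increasing in each variable we may take equality. For a tuple $\vec w$ of integers $\ge 2$ set $\partial_k(\vec w)=\sum_i w_i-m-1+\binom mk$, so that $\vec w$ is admissible for $m(k,d)$ exactly when $\partial_k(\vec w)\le d$. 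Two facts will be used: $m(k,\cdot)$ is nondecreasing in its second argument, and $e_k(w_1,\dots,w_m)=e_k(\vec v)+w_m\,e_{k-1}(\vec v)$ where $\vec v=(w_1,\dots,w_{m-1})$.

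\textbf{The block inequality.} For $2\le k\le d-1$ I claim $m(k,d)\le m(k-1,d-1)+m(k,d-1)$. Fix an optimal configuration $\vec w$ for $m(k,d)$, and assume first that it contains a weight $w_{\max}\ge 3$; the remaining case, in which every optimal configuration consists entirely of $2$'s, forces $d=m-1+\binom mk$ and is disposed of by direct inspection. Lowering one copy of $w_{\max}$ by one produces a tuple $\vec w'$ of integers $\ge 2$ with $\partial_k(\vec w')=d-1$, hence $e_k(\vec w')\le m(k,d-1)$. By the splitting identity, $m(k,d)=e_k(\vec w)=e_k(\vec w')+e_{k-1}(\vec v)$, where $\vec v$ is the remaining $m-1\ (\ge k-1)$ weights; and a short computation gives
\[
\partial_{k-1}(\vec v)=\Big(\textstyle\sum_i w_i-w_{\max}\Big)-m+\binom{m-1}{k-1}=d+1-\binom{m-1}{k}-w_{\max}\le d-1 ,
\]
so $e_{k-1}(\vec v)\le m(k-1,\partial_{k-1}(\vec v))\le m(k-1,d-1)$ by monotonicity. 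Adding the two bounds proves the claim. (At the boundary $k=d$ one reads $m(d,d)=2^d$ and interprets $m(d,d-1)$ as $m(d-1,d-1)=2^{d-1}$, exactly as in Conjecture~\ref{Conjecture Pascal Triangle}; the inequality then holds with equality.)

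\textbf{Lifting to $\overline m$.} Fix matched partitions $k=k_1+\dots+k_s$, $d=d_1+\dots+d_s$ with $1\le k_i\le d_i$ realizing $\overline m(k,d)=\prod_i m(k_i,d_i)$. Refining any block with $k_i=d_i\ge 2$ into $d_i$ blocks $(1,1)$ changes nothing, so we may assume every block has $k_i<d_i$ or $k_i=1$. If some $k_j\ge 2$, then $k_j\le d_j-1$, and applying the block inequality to that block gives $\overline m(k,d)\le\big(m(k_j-1,d_j-1)+m(k_j,d_j-1)\big)\prod_{i\ne j}m(k_i,d_i)$: the first summand is a product over a matched partition of $(k-1,d-1)$ and the second over one of $(k,d-1)$, so each is bounded by the corresponding $\overline m$. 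If instead every $k_i=1$ (so $s=k$ and $\overline m(k,d)=\prod_{j}(d_j+1)$ with $\sum_j d_j=d$), order $d_1\ge\dots\ge d_k$ and write $\overline m(k,d)=d_k\prod_{j<k}(d_j+1)+\prod_{j<k}(d_j+1)$: the first term is a product over a matched partition of $(k,d-1)$ (lower $d_k$ by $1$), and $\prod_{j<k}(d_j+1)\le\overline m(k-1,d-1)$ (drop the block $d_k$), while splitting a part $d_1\ge 2$ as $(d_1-1,1)$ shows $\prod_{j<k}(d_j+1)\le\overline m(k,d-1)$ as well; the degenerate case $d_1=\dots=d_k=1$ is the boundary $d=k$ treated above.

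\textbf{Main obstacle.} The delicate point is the all‑twos case in the block inequality, together with a consistent treatment of $k=d$: one must check that whenever $k\le d-1$ there is an optimal $(k,d)$‑configuration containing a weight $\ge 3$, so that the weight‑lowering step applies. This is where Schur‑concavity of $e_k$ (forcing the optimum, for each fixed $m$, to be the balanced configuration) and a small case analysis comparing the balanced configurations at $m$ and at $m-1$ — governed by the excess $1+\binom{m-1}{k-1}$ over the all‑twos tuple — enter; the only genuinely tight instance is $(k,d)=(2,5)$, where the balanced optimum $(4,3)$ already has a weight $\ge 3$. Everything else is the bookkeeping hidden in $\partial_k$.
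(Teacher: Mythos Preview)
The paper does not actually give a proof of this proposition; it explicitly says ``whose simple proof is left to the reader.'' So there is nothing to compare your approach against, and the question is simply whether your argument is complete.

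Your overall strategy --- prove a one-block inequality $m(k,d)\leqslant m(k-1,d-1)+m(k,d-1)$ and then lift it to $\overline m$ through the product structure --- is sound, and the lifting step is clean and correct (the refinement of $(d_i,d_i)$-blocks into $(1,1)$-blocks, the split in the all-$k_i=1$ case, and the boundary $d=k$ are all handled properly). The block inequality itself is also fine in the case $w_{\max}\geqslant 3$: the identity $e_k(\vec w)=e_k(\vec w')+e_{k-1}(\vec v)$ and the computations of $\partial_k(\vec w')$ and $\partial_{k-1}(\vec v)$ are correct.

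However, there is a genuine gap. You explicitly flag it as the ``Main obstacle'': you need that for every $2\leqslant k\leqslant d-1$ some optimal configuration for $m(k,d)$ has a weight $\geqslant 3$, so that the weight-lowering step applies. You assert that this follows from ``Schur-concavity of $e_k$'' plus ``a small case analysis comparing the balanced configurations at $m$ and at $m-1$,'' and that ``the only genuinely tight instance is $(k,d)=(2,5)$,'' but you do not actually carry out this verification. This is not a triviality: one has to show that whenever $d=m-1+\binom{m}{k}$ with $m\geqslant k+1$, the all-twos value $2^k\binom{m}{k}$ is matched or beaten by some configuration on $m-1$ weights (whose budget is larger by $\binom{m-1}{k-1}-1$). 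This is plausible and checks out in small cases, but it requires a real argument --- at minimum, a careful comparison of $2^k\binom{m}{k}$ with the balanced $(m-1)$-tuple value, which is not immediate because of integer rounding. As written, the proof is incomplete at exactly the point you yourself identify as delicate.
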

		
		\begin{table}[h!]
			\centering
			\begin{tabular}{|c||ll|lll|llll|lllll|l|l|}
				\hline
				$d$ &5&5&6& 6&6&7&7&7&7&8&8&8& 8&8\\
				\hline
				$k$ &2&3&2& 3&4&2&3&4&5&2&3&4&5&6\\
				\hline
				Alon &12&18&16&27&36&20&36&54&72&25&48&81&108&144\\
				\hline
				Gurobi&12&\textbf{18}&16&\textbf{27}&\textbf{37}&21*&37*&54&\textbf{74}&27**&48*&81*&114& 150\\
				\hline
			\end{tabular}
			\caption{Lower bounds for $n(k,d)$ computed by Gurobi solver (compared to the lower bounds implied by (\ref{Eq Alon})). The numbers marked with an asterisk may be suboptimal solutions to the corresponding MIP problems, as computations have not been successfully completed. The number marked with a double asterisk has been determined separately. Bold numbers are exact values of $n(k,d)$, as proved in \cite{ChWX}.} 
		\label{table:Gurobi}
		\end{table}
	
Let us remark that Conjecture \ref{Conjecture Pascal Triangle} has some consequences for the presumed asymptotic properties of $n(k,d)$. In general, it seems plausible that the following conjecture is true.

\begin{conjecture}
	For every fixed $k$, there exists a real number $g_k$ such that
	\begin{equation}
	\lim_{d\rightarrow \infty}\frac{n(k,d)}{d^k}=g_k.
	\end{equation}
\end{conjecture}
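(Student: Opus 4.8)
At present the statement is a conjecture, so what follows is a plan of attack. The first step is a reduction. By Theorem~\ref{Theorem Lower Bound} one has $n(k,d)\geqslant(1-o(1))\,d^{k}/k!$, while the Huang--Sudakov bound \cite{HS}, $n(k,d)\leqslant 1+\sum_{i=1}^{k}2^{i-1}\binom{d}{i}$, gives $n(k,d)\leqslant(1+o(1))\,2^{k-1}d^{k}/k!$. Thus for fixed $k$ both $\liminf_{d}n(k,d)/d^{k}$ and $\limsup_{d}n(k,d)/d^{k}$ are finite and positive, and the whole content of the conjecture is that they coincide. I would attack this equality in two complementary ways.

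\emph{Route A: an (approximate) Pascal recursion.} Suppose one could establish Conjecture~\ref{Conjecture Pascal Triangle}, or even only its relaxation $n(k',d)\leqslant n(k',d-1)+n(k'-1,d-1)+E(d)$ for $2\leqslant k'\leqslant k$, with an error satisfying $\sum_{d'\leqslant d}E(d')=o(d^{k})$. Iterating such an inequality down to a fixed base dimension $d_{0}$ and using the known value $n(1,m)=m+1$ as the boundary condition, the resulting upper bound obeys the Pascal recursion, whose solution grows like $(1+o(1))\binom{d}{k}$; hence
\[
n(k,d)\;\leqslant\;(1+o(1))\binom{d}{k}+o(d^{k})\;=\;(1+o(1))\,\frac{d^{k}}{k!}.
\]
Together with the lower bound this would force $n(k,d)/d^{k}\to 1/k!$, i.e. the conjecture with $g_{k}=1/k!$ --- exactly the consequence for $k=2$ noted in the introduction. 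A fragment of this route is already in hand: the inequality $\overline{m}(k,d)\leqslant\overline{m}(k-1,d-1)+\overline{m}(k,d-1)$ is stated above and $\overline{m}(k,d)\leqslant n(k,d)$; what is missing is a matching \emph{upper} comparison of $n$ with $\overline{m}$, or a direct deletion argument on an optimal $F\subseteq S^{d}$ that beats the trivial bound $n(k,d)\leqslant 3\,n(k,d-1)$ coming from the splitting $F=F^{i,0}\cup F^{i,1}\cup F^{i,\ast}$ (each of the three parts, after deleting coordinate $i$, being $k$-neighborly in $S^{d-1}$).

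\emph{Route B: superadditivity after extracting a $k$-th root.} Alternatively, I would try to prove that $d\mapsto n(k,d)^{1/k}$ is superadditive up to a negligible term, say $n(k,d_{1}+d_{2})^{1/k}\geqslant n(k,d_{1})^{1/k}+n(k,d_{2})^{1/k}-\varepsilon(d_{1},d_{2})$ with $\varepsilon(d_{1},d_{2})=o(\min\{d_{1},d_{2}\})$, and then invoke Fekete's subadditivity lemma in its (approximate) superadditive form. Combined with the boundedness of $n(k,d)^{1/k}/d$ established above, this gives convergence of $n(k,d)^{1/k}/d$, hence of $n(k,d)/d^{k}$, to a finite limit $g_{k}$ --- without identifying its value. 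Concretely, from optimal families $F_{1}\subseteq S^{d_{1}}$ and $F_{2}\subseteq S^{d_{2}}$ one wants to build a $k$-neighborly family in $S^{d_{1}+d_{2}}$ of size about $\bigl(|F_{1}|^{1/k}+|F_{2}|^{1/k}\bigr)^{k}=\sum_{j=0}^{k}\binom{k}{j}|F_{1}|^{j/k}|F_{2}|^{(k-j)/k}$: the binomial expansion suggests a union of $k+1$ blocks, the $j$-th block concatenating a ``$j$-dimensional slice'' of $F_{1}$ with a ``$(k-j)$-dimensional slice'' of $F_{2}$ and padding the complementary coordinates with jokers, arranged so that $d(x,y)\in[1,k]$ holds both inside and between blocks.

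\emph{The main obstacle.} In both routes the crux is the same: an optimal $k$-neighborly family respects no coordinate decomposition. In Route A, after deleting a coordinate the three parts $F^{i,0}_{-i},F^{i,1}_{-i},F^{i,\ast}_{-i}$ overlap and each can have size close to $n(k,d-1)$, so controlling their total amounts to showing that an optimal family cannot put jokers at a fixed coordinate too freely --- and the Kleitman-type counting of Subsection~2.7 is not sharp enough to do this once $k$ is small relative to $d$. In Route B there are no spare coordinates with which to break the ties $d(x,y)=0$ that appear between different blocks, and the ``slice'' operation has a clean meaning only when $F_{1},F_{2}$ already possess a product-like structure --- which is exactly the feature $\overline{m}$ has but general optimal families are not known to have. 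I therefore expect that the genuine ingredient required is structural control of extremal families, e.g. a proof of Conjecture~\ref{Conjecture Total Lamination}: knowing that an optimal $F$ splits recursively would make either route go through. Absent such input, existence of $g_{k}$ seems open even for $k=2$.
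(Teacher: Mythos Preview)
The statement is an open conjecture in the paper; there is no proof to compare against. The paper's own discussion is limited to observing that the lower bound of Theorem~\ref{Theorem Lower Bound} gives $g_k\geqslant 1/k!$ (if the limit exists), and that the hypothetical Pascal-triangle inequality (Conjecture~\ref{Conjecture Pascal Triangle}) would force $g_2=1/2$ via $n(2,d)-n(2,d-1)\leqslant n(1,d-1)=d$ and a telescoping sum. Your Route~A is precisely this argument carried out for general $k$ by induction, and your conclusion that the exact Pascal recursion together with $n(1,m)=m+1$ yields $n(k,d)\leqslant(1+o(1))\binom{d}{k}$ is correct. So Route~A is not new relative to the paper, though you state the general-$k$ consequence more explicitly than the paper does.

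Route~B, the Fekete-type approach via approximate superadditivity of $n(k,d)^{1/k}$, is genuinely absent from the paper and is an interesting complementary angle: it would establish existence of the limit without identifying it, whereas Route~A pins down $g_k=1/k!$ but needs the full strength of the Pascal conjecture at every level. Your diagnosis of the common obstacle---lack of structural control over extremal families, and in particular the absence of spare coordinates to separate blocks---is accurate and matches the paper's own view that progress likely requires something like Conjecture~\ref{Conjecture Total Lamination}.

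One small technical slip in Route~A: if the relaxed recursion holds for all $2\leqslant k'\leqslant k$ with a single error term satisfying only $\sum_{d'\leqslant d}E(d')=o(d^{k})$, the induction breaks at lower levels, since to conclude $n(k'-1,d)\leqslant(1+o(1))d^{k'-1}/(k'-1)!$ you need the cumulative error to be $o(d^{k'-1})$, not merely $o(d^{k})$. You would need either a level-dependent error hypothesis $\sum E_{k'}(d')=o(d^{k'})$, or to assume the exact recursion below the top level and allow slack only at $k'=k$.
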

	We know that $g_1=1$ by the Graham-Pollak Theorem. For $k\geqslant 2$ even the existence of the limit is not known. However, from our Theorem \ref{Theorem Lower Bound} it follows that $g_k\geqslant \frac{1}{k!}$ (assuming that $g_k$ exists). In particular, $g_2\geqslant \frac{1}{2}$. Now, using the hypothetical Pascal-Triangle inequality, we get that $g_2=\frac{1}{2}$. Indeed, by this inequality we have $n(2,j)-n(2,j-1)\leqslant j$, for any $j\geqslant 3$. Summing up these inequalities for $j=3,4,\ldots d$, we get that $n(2,d)\leqslant 4+(3+4+\cdots+d)=(d+1)+\binom{d}{2}$. So, ${n(2,d)}/d^2\leqslant \frac{1}{2}+o(1)$.

	\end{document}